\documentclass[a4paper, reqno, 12pt]{amsart}

\usepackage[english]{babel}
\usepackage{amsmath}
\usepackage{amssymb}
\usepackage{enumerate}
\usepackage{ifthen}
\usepackage{bbm}
\usepackage{color}
\usepackage{latexsym}
\provideboolean{shownotes} 
\setboolean{shownotes}{true}
\usepackage{hyperref}
\usepackage{graphicx}
\usepackage[overload]{empheq}

\usepackage{cancel}

\usepackage{geometry}
\usepackage{inputenc}

\geometry{a4paper, bottom=3cm, left=2.5cm, right=2.4cm}

\usepackage{amsaddr}

\newcommand{\margnote}[1]{
\ifthenelse{\boolean{shownotes}}%
{\marginpar{\raggedright\tiny\texttt{#1}}}%
{}%
}

\newtheorem{theorem}{Theorem}[section]

\newtheorem{lemma}[theorem]{Lemma}

\newtheorem{remark}[theorem]{Remark}

\newenvironment{nouppercase}{%
\renewcommand{\uppercasenonmath}[1]{}}{}

\makeatletter
\def\@settitle{\begin{center}%
  \baselineskip14\p@\relax
    \normalfont\LARGE
\uppercasenonmath\@title
  \@title
  \end{center}%
}
\makeatother

\usepackage[style=numeric-comp,maxalphanames=5,backend=bibtex]{biblatex}
\DeclareNameAlias{author}{last-first} 
\addbibresource{biblioWW.bib}

\begin{document}

\author{Diego C\'ordoba$^*$ and Elena Di Iorio$^\dag$}
\address{Instituto de Ciencias Matem\'aticas\\Consejo Superior de Investigaciones Cient\'ificas \\
28049, Madrid, Spain}
\email{$^*$dcg@icmat.es, \\ $^\dag$elena.di.iorio@icmat.es}

\title{Existence of gravity-capillary Crapper waves with concentrated vorticity}

\begin{nouppercase}
\maketitle
\end{nouppercase}

\begin{abstract}
The aim of this paper is to prove the existence of gravity-capillary Crapper waves with the presence of vorticity. In particular, we consider a concentrated vorticity: point vortex and vortex patch. We show that for small gravity and small vorticity it is possible to demonstrate that the waves are overhanging.
\end{abstract}

\medskip

\section{Introduction}
\noindent This paper is devoted to the study of perturbations of the Crapper waves through gravity and concentrated vorticity. The problem that we analyze is the free-boundary stationary Euler equation with vorticity

\begin{subequations}\label{v-euler}
\begin{align}[left=\empheqlbrace\,]
&v\cdot\nabla v+\nabla p+ge_2=0\hspace{1cm}\textrm{in}\hspace{0.3cm}\Omega\label{v-euler1}\\[2mm]
&\nabla\cdot v=0\hspace{3.35cm}\textrm{in}\hspace{0.3cm}\Omega\label{v-euler2}\\[2mm]
&\nabla^{\perp}\cdot v=\omega \hspace{3.05cm}\textrm{in}\hspace{0.3cm}\Omega\label{v-euler3}\\[2mm]
&v\cdot n =0 \hspace{3.45cm}\textrm{on}\hspace{0.3cm} \mathcal{S}\label{v-euler4}\\[2mm]
&p=TK\hspace{3.5cm}\textrm{on}\hspace{0.3cm}\mathcal{S}\label{v-euler5}
\end{align}
\end{subequations}
\medskip

\noindent Here $v$ and $p$ are the velocity and the pressure, respectively; $g$  is the gravity, $e_2$ is the second vector of the Cartesian basis, $K$ is the curvature of the free boundary, $T$ the surface tension and $\omega$ the vorticity, that we will specify later. Moreover, since $\Omega$ is defined as a fluid region and  $\mathbb{R}^2\setminus \Omega$ as a vacuum region, there exists an interface $\mathcal{S}$ that separates the two regions and $n$ is the normal vector at the interface. We parametrize the interface with $z(\alpha)=(z_1(\alpha),z_2(\alpha))$, for $\alpha\in[-\pi,\pi]$. Thus $\Omega$ is defined for $-\pi< x< \pi$ and $y$ below the interface $z(\alpha)$ with $z_2(\pm \pi)=1$.\\

\noindent The Crapper waves are exact solutions of the water waves problem with surface tension at infinite depth. In \cite{Crapper1957}, Crapper proves the existence of pure capillary waves with an overhanging profile. Its result has been extended in \cite{Kinnersley1976} by Kinnersley for the finite depth case and in \cite{AAW2013} by Akers-Ambrose-Wright by adding a small gravity. In \cite{ASW2014} Ambrose-Strauss-Wright analyze the global bifurcation problem for traveling waves, considering the presence of two fluids and in \cite{CEG2016} and \cite{CEG2019}, C\'ordoba-Enciso-Grubic add beyond the small gravity a small density in the vacuum region in order to prove the existence of self-intersecting Crapper solutions with two fluids.\\
\noindent In the present paper we will deal with rotational waves. The literature about these waves is very recent and the first important result is the one by Constantin and Strauss \cite{CS2004}. They study the rotational gravity water waves problem without surface tension at finite depth and they are able to prove the existence of large amplitude waves. Later, in \cite{CV2011}, Constantin and Varvaruca extend the Babenko equation for irrotational flow \cite{Babenko87} to the gravity water waves with constant vorticity at finite depth. They remark that the new formulation opens the possibility of using global bifurcation theory to show the existence of large amplitude and possibly overhanging profiles. Furthermore, in a recent paper \cite{CSV2016}, the same authors construct waves of large amplitude via global bifurcation. Such waves could have overhanging profiles but their explicit existence is still an open problem.\\
\noindent Furthermore, there are some new results by Hur and Vanden-Broeck \cite{Hur-Vanden2020} and by Hur and Wheeler \cite{Hur-Wheeler2020}, where the authors prove the numerical and further analytical existence of a new exact solution for the periodic traveling waves in a constant vorticity flows of infinite depth, in the absence of gravity and surface tension. They show that the free surface is the same as that of Crapper's capillary waves in an irrotational flow.\\
\noindent Concerning the presence of surface tension in a rotational fluid we recall the works by Wahl\'en, in  \cite{Wahlen2006-1}, where the author proves the existence of symmetric regular capillary waves
for arbitrary vorticity distributions, provided that the wavelength is small enough and  in \cite{Wahlen2006-2}, he adds a gravity force acting at the interface and proves the existence of steady periodic capillary-gravity waves. As far as we know, there is not a proof of the existence of overhanging waves in both capillary and gravity-capillary rotational settings, with a fixed period.  In \cite{deBoeck2014}, De Boeck shows that Crapper waves are limiting configuration for both gravity-capillary water waves in infinte depth (see also \cite{AAW2013}) and gravity-capillary water waves with constant vorticity at finite depth. His formulation comes from the one introduced in \cite{CV2011} and the idea is based on taking a small period, which implies that Crapper's waves govern both gravity-capillary and gravity-capillary with constant vorticity at finite depth. 
\noindent  Differently from his work, we will consider a fixed period and small and concentrated vorticity as the point vortex and the vortex patch. \\
\noindent In \cite{SWZ2013}, Shatah, Walsh and Zheng study the capillary-gravity water waves with concentrated vorticity and they extend their work in \cite{EWZ2019} by considering an exponential localized vorticity; in both cases they perturb from the flat and they do not consider overhanging profiles.\\ 
\noindent  However, the technique we will use is completely different from the cited papers since we would like to show the existence of a perturbation of Crapper's waves with both small concentrated vorticity and small gravity. 

\subsection{Outline of the paper}
\noindent In section \ref{settings} we describe the setting in which we work and we introduce a new formulation for the problem \eqref{v-euler}, through the stream function and a proper change of coordinates to fix the domain. In section \ref{point-case} we describe the point vortex formulation and  the principal operators that identify our problem. In the end of the section we will prove the main theorem \ref{point-existence}, which shows the existence of a perturbation of Crapper's waves with a small point vortex. In the last section we introduce the problem \eqref{v-euler} with a vortex patch, which we identify through three operators and the implicit function theorem allow us to prove the existence of a perturbation of Crapper's waves also with a small vortex patch, theorem \ref{patch-existence}.
\bigskip

\section{Setting of the problem}\label{settings}
\noindent The interface $\mathcal{S}=\partial\Omega$, between the fluid region with density $\rho=1$ and the vacuum region, has a parametrization $z(\alpha)$ which satisfies  the periodicity conditions 

$$z_1(\alpha+2\pi)=z_1(\alpha) +2\pi, \hspace{1cm} z_2(\alpha+2\pi)=z_2(\alpha),$$
\medskip

\noindent and it is symmetric with respect to the $y-$axis

\begin{equation}\label{z-parity}
z_1(\alpha)=-z_1(-\alpha), \hspace{1cm} z_2(\alpha)=z_2(-\alpha).
\end{equation}
\medskip

\noindent The aim of this paper is to prove the existence of perturbations of the Crapper waves with vorticity  through the techniques developed in \cite{OS2001}, in \cite{AAW2013} and \cite{CEG2016}. First of all we will rewrite the system \eqref{v-euler} in terms of the stream function and then we will do some changes of variables in order to modify the fluid region and to analyse the problem in a more manageable domain. The key point is the use of the implicit function theorem to show that in a neighborhod of the Crapper solutions there exists a perturbation due to the presence of the gravity and the vorticity.

\subsection{The stream formulation with vorticity}
\noindent The fluid flow is governed by the incompressible stationary Euler equations \eqref{v-euler}. The incompressibility condition \eqref{v-euler2} implies the existence of a stream function $\psi:\Omega\rightarrow\mathbb{R}$, with $v=\nabla^{\perp}\psi$ and the kinematic boundary condition \eqref{v-euler4} implies $\psi=0$ on $\mathcal{S}$. In addition we can rewrite the equation \eqref{v-euler1} at the interface by using the condition  \eqref{v-euler5} and the fact that the vorticity we consider is concentrated in the domain $\Omega$, we end up in the Bernoulli equation.

\begin{equation}
\frac{1}{2} |v|^2+TK+gy=\textrm{constant}.
\end{equation}
\medskip

\noindent We  can write the system \eqref{v-euler} in terms of the stream function as follows

\begin{subequations}\label{psi-euler}
\begin{align}[left=\empheqlbrace\,]
&\Delta\psi=\omega\hspace{6cm}\textrm{in}\hspace{0.3cm}\Omega\label{psi-eul1}\\
&\psi=0\hspace{6.4cm}\textrm{on}\hspace{0.3cm}\mathcal{S}\label{psi-eul2}\\
&\frac{1}{2}|\nabla\psi|^2+gy+TK=\textrm{constant}\hspace{1.9cm}\textrm{on}\hspace{0.3cm}\mathcal{S}\label{psi-eul3}\\
&\frac{\partial\psi}{\partial x}=0 \hspace{6.1cm}\textrm{on}\hspace{0.3cm} x=\pm\pi\label{psi-eul4}\\
&\lim_{y\rightarrow 0}\left(\frac{\partial\psi}{\partial y},-\frac{\partial\psi}{\partial x}\right)=(c,0)\label{psi-eul5}
\end{align}
\end{subequations}
\medskip

\noindent where, the condition \eqref{psi-eul4} comes from the periodic and symmetric assumptions and the condition \eqref{psi-eul5} means that the flow becomes uniform at the infinite bottom and $c\in\mathbb{R}$ is the wave speed. The main problem we have to face is the absence of a potential and is due to the rotationality of the problem. We will treat the point vortex and the vortex patch in two different ways, since the singularity of the problem is distinct, but before dealing with our problem we will focus on the general framework.
\bigskip

\subsection{The general vorticity case}\label{subsec-change-variables}\label{general-vorticity}
\noindent  The main difficulties of the problem \eqref{psi-euler} are the presence of a moving interface and the absence of a potential, since the fluid is not irrotational. We recall the Zeidler theory \cite{Zeidler} about pseudo-potential, so we introduce the function $\phi$, which satisfies the following equations

\begin{equation}\label{pseudo-potential}
\left\{\begin{array}{lll}
\displaystyle\frac{\partial\phi}{\partial x}=W(x,y)\frac{\partial\psi}{\partial y}\\[2mm]
\displaystyle\frac{\partial\phi}{\partial y}=-W(x,y)\frac{\partial\psi}{\partial x},
\end{array}\right.
\end{equation}
\medskip

\noindent where $W(x,y)$ is exactly equal to $1$ when the fluid is irrotational and satisfies 

\begin{equation}\label{x-y-W}
\frac{\partial W}{\partial x}\frac{\partial\psi}{\partial x}+\frac{\partial W}{\partial y}\frac{\partial\psi}{\partial y}+W\Delta\psi=0.
\end{equation}
\medskip

\noindent We transform the problem from the $(x,y)$-plane into the $(\phi,\psi)$-plane, by taking the advantage of the fact that the stream function is zero at the interface, see fig. \ref{omega-to-disk} . Furthermore, we consider the case of symmetric waves, then it follows that 

\begin{equation}\label{symmetry}
\left\{\begin{array}{lll}
\phi(x,y)=-\phi(-x,y)\\[3mm]
\psi(x,y)=\psi(-x,y),
\end{array}\right.
\end{equation}
\medskip

 \noindent and they satisfy the following relations, coming from \eqref{pseudo-potential}.

\begin{equation}\label{x-y-Jacobian}
\begin{pmatrix}
\displaystyle\frac{\partial x}{\partial\phi} & \displaystyle\frac{\partial x}{\partial\psi}\\
\displaystyle\frac{\partial y}{\partial\phi} & \displaystyle\frac{\partial y}{\partial\psi}\\
\end{pmatrix}
=\frac{1}{W(v_1^2+v_2^2)} 
\begin{pmatrix}
v_1 & -W v_2\\
v_2 & W v_1
\end{pmatrix},
\end{equation} 
\medskip

\noindent where $v_1, v_2$ are the components of the velocity field. Moreover, we want to write the system in a non-dimensional setting, thus the new variables are 
$$(\phi,\psi)=\frac{1}{c}(\phi^*,\psi^*), \quad
(v_1,v_2)=\frac{1}{c} (v_1^*,v_2^*), \quad \omega=\frac{1}{c}\omega^*,$$

\noindent where the variables with the star are the dimensional one and $c$ is the wave speed. The properties of our problem allow us to pass from $\Omega$ into $\tilde{\Omega}$, defined as follows

\begin{equation}\label{Omega-tilde}
\tilde{\Omega}:=\{(\phi,\psi): -\pi<\phi<\pi, -\infty<\psi<0\}.
\end{equation}
\medskip

\noindent We have to transform the system \eqref{psi-euler} and the equation \eqref{x-y-W} in the new coordinates. So we take the derivative with respect to $\phi$ of the condition \eqref{psi-eul3} and we get

\begin{equation}\label{phi-psi-DBernoulli}
\frac{\partial}{\partial\phi}\left(\frac{v_1^2+v_2^2}{2}\right)+p\frac{v_2}{v_1^2+v_2^2}-q\frac{\partial}{\partial\phi}\left[\frac{W}{\sqrt{v_1^2+v_2^2}}\left(v_1\frac{\partial v_2}{\partial\phi}-v_2\frac{\partial v_1}{\partial\phi}\right)\right]=0,
\end{equation}
\medskip

\noindent where $\displaystyle p=\frac{g}{c^2}$ and $\displaystyle q=\frac{T}{c^2}$ and \eqref{x-y-W} becomes

\begin{equation}\label{phi-psi-W}
(v_1^2+v_2^2)\frac{\partial W}{\partial\psi}=W\omega.
\end{equation}
\medskip

\noindent The problem we study is periodic, so it is more natural to do the analysis in a circular domain. We introduce the independent variable $\zeta=e^{-i\phi+\psi}$, where $\phi+i\psi$ runs in $\tilde{\Omega}$ and $\zeta$ in the unit disk, so $\zeta=\rho e^{i\alpha}$.  The relation between $(\phi,\psi)$ and the variable in the disk $(\alpha,\rho)$ is the following $(\phi,\psi)=(-\alpha,\log(\rho))$, where $-\pi<\alpha<\pi$ and $0 <\rho<1$. Thus, we pass from $\tilde{\Omega}$ into  the unit disk, see fig. \ref{omega-to-disk}. 

\begin{figure}[htbp]
\centering
\includegraphics[scale=0.5]{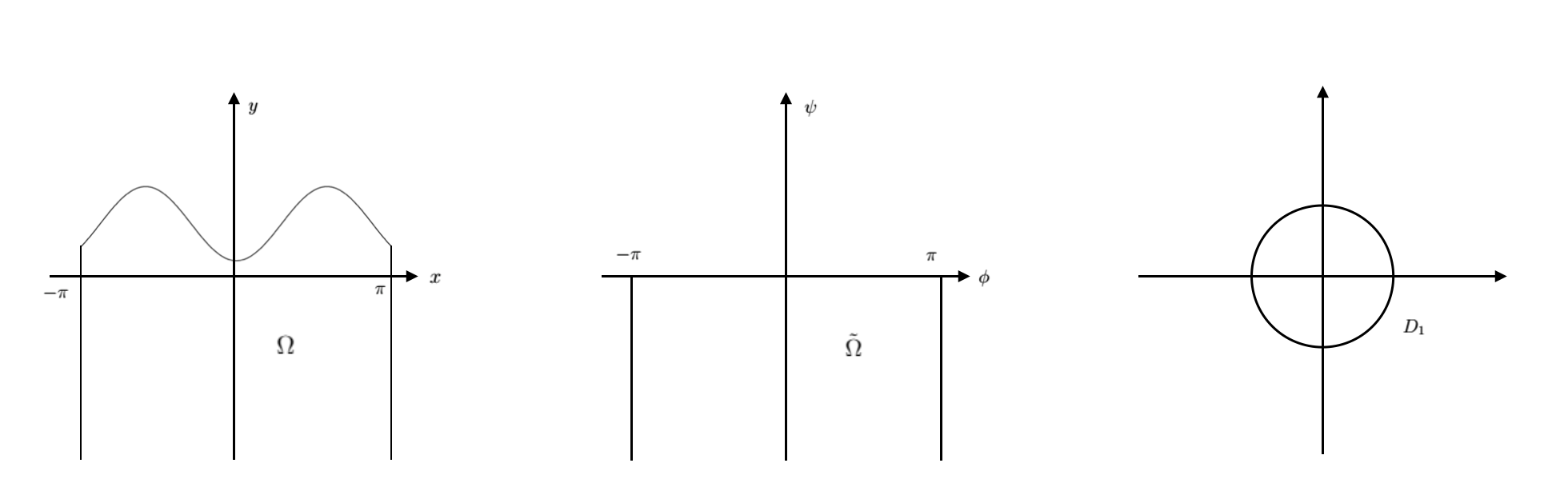}
\caption{The domains $\Omega$, $\tilde{\Omega}$ and the disk.}\label{omega-to-disk}
\end{figure}
\medskip

\noindent Furthermore we define the dependent variables $\tau(\alpha,\rho)$ and $\theta(\alpha,\rho)$ as follows

\begin{equation}\label{tau-theta}
\tau=\frac{1}{2}\log(v_1^2+v_2^2),\quad \theta=\arctan\left(\frac{v_2}{v_1}\right)
\end{equation}
\medskip

\noindent Thanks to \eqref{tau-theta}, the equation \eqref{phi-psi-W} for $W$ becomes

\begin{equation*}
e^{2\tau}\rho\frac{\partial W}{\partial\rho}=W\omega,
\end{equation*}
\medskip

\noindent then we have 

\begin{equation}\label{alpha-rho-W}
\displaystyle W(\alpha,\rho)=\exp\left(\int_{0}^{\rho}\omega\frac{e^{-2\tau(\alpha,\rho')}}{\rho'}\,d\rho'\right). 
\end{equation}
\medskip

\noindent The derivative of the Bernoulli equation \eqref{phi-psi-DBernoulli}, computed at the interface $z(\alpha)$ which corresponds to $\rho=1$, becomes

\begin{equation}\label{alpha-rho-DBernoulli}
\frac{\partial}{\partial\alpha}\left(\frac{1}{2}e^{2\tau(\alpha,1)}\right)-p\frac{e^{-\tau(\alpha,1)}\sin(\theta(\alpha,1))}{W(\alpha,1)}+q\frac{\partial}{\partial\alpha}\left(W(\alpha,1)e^{\tau(\alpha,1)}\frac{\partial\theta}{\partial\alpha}\right)=0.
\end{equation}
\bigskip

\section{The point vortex case}\label{point-case}
\subsection{The point vortex framework}
 We consider a point of constant vorticity, that does not touch the interface $z(\alpha)$, defined as $\omega=\omega_0 \delta((x,y)-(0,0))$, where $\delta((x,y)-(0,0))$ is a delta distribution taking value at the point $(0,0)$ and $\omega_0$ is a small constant. In addition, since we have a fluid with density $1$ inside the domain $\Omega$ and the vacuum in $\mathbb{R}^2\setminus\Omega$, then there is a discontinuity of the velocity field at the interface and a concentration of vorticity $\tilde{\omega}(\alpha)\delta((x,y)-(z_1(\alpha),z_2(\alpha)))$, where $\tilde{\omega}(\alpha)$ is the amplitude of the vorticity along the interface. This implies the stream function $\psi$  in $\Omega$ to be the sum of an harmonic part 

\begin{equation}\label{harmonic-point-stream}
\psi_H(x,y)=\frac{1}{2\pi}\int_{-\pi}^{\pi}\log\left|(x,y)-(z_1(\alpha'), z_2(\alpha'))\right|\tilde{\omega}(\alpha')\,d\alpha',
\end{equation}

\noindent which is continuous over the interface and another part related to the point vortex. The velocity can be obtained by taking the orthogonal gradient of the stream function and we have\\

\begin{equation}\label{velocity-point}
v(x,y)=(\partial_y\psi_{H}(x,y), -\partial_x\psi_H(x,y))+\frac{\omega_0}{2\pi}\frac{(y,-x)}{x^2+y^2}.
\end{equation}
\medskip

\noindent However, in order to describe the point vortex problem we have to adapt the kinematic boundary condition \eqref{v-euler4} and the Bernoulli equation \eqref{v-euler5}, equivalent to \eqref{psi-eul3}. At first, let us compute the velocity at the interface by taking the limit in the normal direction and we get

\begin{equation}
\begin{split}
v(z(\alpha))&=(\partial_{z_2}\psi_H,-\partial_{z_1}\psi_H)+\frac{1}{2}\frac{\tilde{\omega}(\alpha)}{|\partial_{\alpha}z|^2}\partial_{\alpha}z+\frac{\omega_0}{2\pi}\frac{(z_2(\alpha),-z_1(\alpha))}{|z(\alpha)|^2}\\[3mm]
&=BR(z(\alpha),\tilde{\omega}(\alpha))+\frac{1}{2}\frac{\tilde{\omega}(\alpha)}{|\partial_{\alpha}z|^2}\partial_{\alpha}z+\frac{\omega_0}{2\pi}\frac{(z_2(\alpha),-z_1(\alpha))}{|z(\alpha)|^2},
\end{split}
\end{equation}
\medskip

\noindent where $BR(z(\alpha),\tilde{\omega}(\alpha))$ is  the Birkhoff-Rott integral

\begin{equation*}
BR(z(\alpha),\tilde{\omega}(\alpha))=\frac{1}{2\pi}\textrm{P.V.}\int_{-\pi}^{\pi}\frac{(z(\alpha)-z(\alpha'))^{\perp}}{|z(\alpha)-z(\alpha')|^2}\cdot\tilde{\omega}(\alpha')\,d\alpha'.
\end{equation*}
\medskip

\noindent Thus the condition \eqref{v-euler4} becomes

\begin{equation}\label{point-kbc}
\begin{split}
&v(z(\alpha))\cdot(\partial_{\alpha}z(\alpha))^{\perp}=\left(BR(z(\alpha),\tilde{\omega}(\alpha))+\frac{\omega_0}{2\pi}\frac{(z_2(\alpha),-z_1(\alpha)}{|z(\alpha)|^2}\right)\cdot \partial_{\alpha}z(\alpha)^{\perp}=0.
\end{split}
\end{equation}
\medskip

\noindent To deal with the Bernoulli equation and to reach a manageable formulation, we have to use the change of variables described in section \ref{general-vorticity}. We pass from the domain $\Omega$ in $(x,y)$ variables, fig. \ref{omega-to-disk} into $\tilde{\Omega}$ in $(\phi, \psi)$ and finally in the unit disk. In order to pass from $\Omega$ into $\tilde{\Omega}$ we use the pseudo-potential defined in \eqref{pseudo-potential} and \eqref{x-y-W}. Moreover, as one can see in fig. \ref{disk} (left), the interface $z(\alpha)$ is sending in the line $\psi=0$, thanks to condition \eqref{psi-eul2}, and the point vortex is still a point $(0,\psi_0)$ on the vertical axis due to the oddness of $\phi$. In order to pass from $\tilde{\Omega}$ into the unit disk, see fig. \ref{disk} (right), we use the function $e^{\psi-i\phi}=\rho e^{i\alpha}$ and the point vortex $(0,\psi_0)$ becomes a point $(0,\rho_0)$, it does not depend on the angle $\alpha$.
\medskip

\begin{figure}[htbp]
\centering
\includegraphics[scale=0.5]{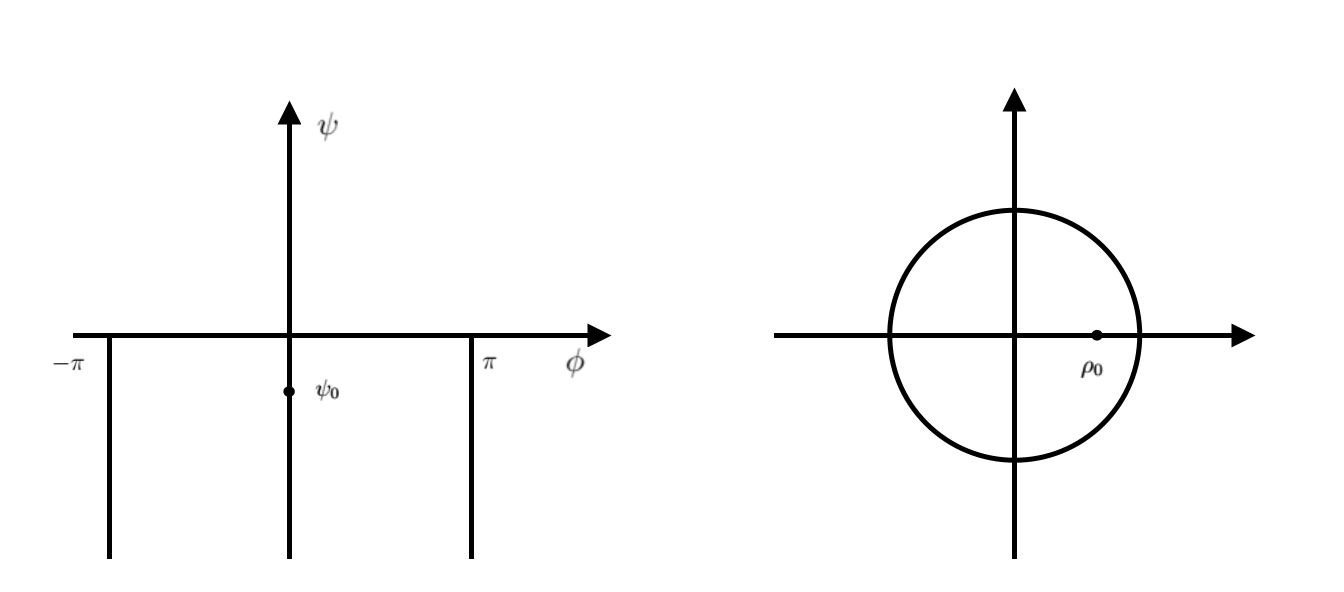}\label{disk}
\caption{The domain $\tilde{\Omega}$ and $D_1$}\label{disk}
\end{figure}
\medskip

\noindent After this change of variables, we rewrite the equation \eqref{alpha-rho-W} for $W(\alpha,\rho)$, by substituting $\omega=\omega_0\delta((\alpha,\rho)-(0,\rho_0))$. And we have

\begin{equation}\label{W_0}
W(\alpha,\rho)=
\left\{
\begin{array}{rl}
1 & \alpha\neq 0\\[2mm]
\displaystyle \exp\left(\frac{\omega_0 e^{-2\tau(\alpha,\rho_0)}}{\rho_0}\right) & \alpha=0,\hspace{0.3cm}\rho_0\in (0,\rho).
\end{array}\right.
\end{equation}
\medskip

\noindent We immediately point out that in this case the function $W(\alpha,\rho)=W_{\omega_0,\rho_0}=W_0\in\mathbb{R}$ and the constant is exactly one when there is no vorticity.\\
\noindent The derivative of the Bernoulli equation \eqref{alpha-rho-DBernoulli} becomes

\begin{equation}\label{point-DBernoulli}
\frac{\partial}{\partial\alpha}\left(\frac{1}{2}e^{2\tau(\alpha,1)}\right)-p\frac{e^{-\tau(\alpha,1)}\sin(\theta(\alpha,1))}{W_0}+q W_0\frac{\partial}{\partial\alpha}\left(e^{\tau(\alpha,1)}\frac{\partial\theta}{\partial\alpha}\right)=0
\end{equation}
\medskip

\noindent By integrating with respect to $\alpha$, we get

\begin{equation*}
\frac{1}{2}e^{2\tau(\alpha,1)}-\frac{p}{W_0}\int_{-\pi}^{\alpha} e^{-\tau(\alpha',1)}\sin(\theta(\alpha',1))\,d\alpha'+q W_0 e^{\tau(\alpha,1)}\frac{\partial\theta}{\partial\alpha}(\alpha,1)=\tilde{\gamma}.
\end{equation*}
\medskip

\noindent In the pure capillarity case the constant is exactly $\frac{1}{2}$. For this reason we take $\tilde{\gamma}=\frac{1}{2}+B$, where $B$ is a perturbation of the Crapper constant, see \cite{OS2001}. We multiply the equation  by $e^{-\tau(\alpha,1)}$ and we get a new formulation for the Bernoulli equation.

\begin{equation}\label{point-Bernoulli-tau-theta}
\begin{split}
&\sinh(\tau(\alpha,1))-\frac{p}{W_0} e^{-\tau(\alpha,1)}\left(\int_{-\pi}^{\alpha}e^{-\tau(\alpha',1)}\sin(\theta(\alpha',1)\,d\alpha'{-1}\right)\\[3mm]
&+q W_0\frac{\partial\theta(\alpha,1)}{\partial\alpha}-Be^{-\tau(\alpha,1)}=0.
\end{split}
\end{equation}
\medskip

\noindent  We can solve our problem by finding $2\pi$ periodic functions $\tau(\alpha)$ even and $\theta(\alpha)$ odd, a function $\tilde{\omega}(\alpha)$ even, that satisfy the following equations \eqref{point-kbc} and \eqref{point-Bernoulli-tau-theta}. However, in subsection \ref{subsec-change-variables}, we explain the necessary  change of variables to fix the domain. We observe that at the interface  $z(\alpha)$, we have  $\psi(z(\alpha))=0$ and $\rho=1$, respectively. Thus,


\begin{equation}\label{phi-parametrization}
\phi(z(\alpha))=-\alpha\quad\Longrightarrow\quad \nabla\phi(z(\alpha))\cdot\partial_{\alpha}z(\alpha)= -1,
\end{equation}
\medskip

\noindent from \eqref{pseudo-potential} we get that \eqref{phi-parametrization}, can be written as follows

\begin{equation}\label{v-tangential}
W_0 v(z(\alpha))\cdot\partial_{\alpha}z(\alpha)=-1.
\end{equation}
\medskip

\noindent Since the equation \eqref{v-tangential} has been obtained by using the kinematic boundary condition \eqref{point-kbc}, then to solve our problem we will use the Bernoulli equation \eqref{point-Bernoulli-tau-theta} and the equation \eqref{v-tangential}.
\bigskip

\subsection{Crapper formulation}\label{Crapper formulation}
\noindent Our goal is to prove the existence of overhanging waves with the presence of concentrated vorticity, such as a point vortex or a vortex patch (Section \ref{patch}). It is well-known that without vorticity ($\omega_0=0$ equivalent to $W_0=1$), in \cite{AAW2013} the authors prove the existence of gravity-capillary overhanging waves. If we remove also the gravity then there is the pillar result of Crapper \cite{Crapper1957}, where the problem was to find a $2\pi$ periodic, analytic function $f_c=\theta_c+i\tau_c$ in the lower half plane which solves the Bernoulli equation 

\begin{equation}\label{Crapper-Bernoulli}
\sinh(\tau_c)+q \frac{\partial\theta_c}{\partial\alpha}=0,
\end{equation}
\medskip

\noindent where $q=\frac{T}{c^2}$. Furthermore, the analyticity of the function $f$ implies that $\tau_c$ can be written as the Hilbert transform of $\theta_c$ at the boundary $\rho=1$, so the equation above reduces to an equation in the variable $\theta_c$,

\begin{equation}\label{Crapper-Bernoulli-only-theta}
\sinh(\mathcal{H}\theta_c)+q \frac{\partial\theta_c}{\partial\alpha}=0.
\end{equation}
\medskip

\noindent This problem admits a family of exact solutions,

\begin{equation}\label{Crapper-solutions}
f_c(w)=2i\log\left(\frac{1+Ae^{-iw}}{1-Ae^{-iw}}\right),
\end{equation}
\medskip

\noindent  where $w=\phi+i\psi$ and in this case $(\phi,\psi)$ are harmonic conjugates. The parameter $A$ is defined in $(-1,1)$ and for $|A|<A_0=0.45467\ldots$, the interface do not have self-intersection. Moreover, by substituting \eqref{Crapper-solutions} into \eqref{Crapper-Bernoulli-only-theta} for $\rho=1$, we get  $q=\frac{1+A^2}{1-A^2}$. This implies 

\begin{equation}\label{surface-tension}
T=\frac{1+A^2}{1-A^2}c^2.
\end{equation}
\medskip

\noindent By using \eqref{x-y-Jacobian} in the Crapper case so with $W=1$, coupled with $\phi=-\alpha$ and $\rho=1$,  we get 

\begin{equation}\label{Crapper-z-derivative}
\partial_{\alpha}z^c(\alpha)=- e^{-\tau_c(\alpha)+i\theta_c(\alpha)}.
\end{equation}
\medskip

\noindent We focus on this kind of waves because for some values of the parameter $A$, these waves are overhanging.

\bigskip

\subsection{Perturbation of Crapper waves with a point of vorticity}

\noindent In our formulation, the main difference with respect to the Crapper \cite{Crapper1957} waves is in the function $f=\tau+i\theta$ which is not analytic because of the presence of vorticity. The idea is to prove that our solutions are perturbation of the Crapper waves. If we recall the Crapper solution with small gravity but without vorticity, $(\theta_A,\tau_A)$, we know that $f_A=\theta_A+i\tau_A$ is now  analytic and $(\theta_A,\tau_A)$ satisfy the following relations in both $(\phi,\psi)$ and $(\alpha,\rho)$ variables.

\begin{equation} \label{tauCR-thetaCR}
\begin{cases}
\displaystyle\frac{\partial\theta_A}{\partial\phi}=\frac{\partial\tau_A}{\partial\psi} \\[4mm]
\displaystyle\frac{\partial\theta_A}{\partial\psi}=-\frac{\partial\tau_A}{\partial\phi}
\end{cases}\Longrightarrow
\begin{cases}
\displaystyle\frac{\partial\theta_A}{\partial\alpha}=-\rho\frac{\partial\tau_A}{\partial\rho} \\[4mm]
\displaystyle\rho\frac{\partial\theta_A}{\partial\rho}=\frac{\partial\tau_A}{\partial\alpha}.
\end{cases}
\end{equation}
\medskip

\noindent Moreover, $\tau_A=\mathcal{H}\theta_A$ at the interface. The idea is to write our dependent variables $\tau$ and $\theta$ as the sum of a Crapper part and a small perturbation, due to the small vorticity. So we have

\begin{equation}\label{point-tau-theta-perturbations}
\tau=\tau_A+\omega_0\tilde{\tau},\quad \theta=\theta_A+\omega_0\tilde{\theta}.
\end{equation}
\medskip

\noindent So the Bernoulli equation \eqref{point-Bernoulli-tau-theta}, reduces

\begin{equation}\label{point-Bernoulli}
\begin{split}
&\sinh(\mathcal{H}\theta_A+\omega_0\tilde{\tau})-p e^{-\mathcal{H}\theta_A-\omega\tilde{\tau}}\left(\frac{1}
{W_0}\int_{-\pi}^{\alpha}e^{-\mathcal{H}\theta_A-\omega_0\tilde{\tau}}\sin(\theta_A+\omega_0\tilde{\theta})\,d\alpha'{-1}\right)\\[2mm]
&+q\frac{\partial(\theta_A+\omega_0\tilde{\theta})}{\partial\alpha}W_0-Be^{-\mathcal{H}\theta_A-\omega_0\tilde{\tau}}=0\hspace{1cm}\textrm{at}\hspace{0.3cm}\rho=1.
\end{split}
\end{equation}
\medskip

\noindent However, we will figure out that $\tilde{\tau}$ and $\tilde{\theta}$ are functions of $\theta_A$ and so \eqref{point-Bernoulli} will be an equation in the variable $\theta_A$. In order to end up with this statement we need to use some properties of our problem.  We use the incompressibility and rotational conditions and we get the following relations for $(\tau,\theta)$

\begin{equation}\label{point-theta-tau-quasi-Cauchy-Riemann-1}
\left\{\begin{array}{lll}
\displaystyle\frac{\partial\theta}{\partial\psi}=-W_0\frac{\partial\tau}{\partial\phi}\\[5mm]
\displaystyle\frac{\partial\theta}{\partial\phi}=\frac{\omega_0 e^{-2\tau(0,\psi_0)}\delta((\phi,\psi)-(0,\psi_0))}{W_0}+\frac{1}{W_0}\frac{\partial\tau}{\partial\psi}
\end{array}\right.
\end{equation}
\medskip

%

\noindent By substituting \eqref{point-tau-theta-perturbations} in \eqref{point-theta-tau-quasi-Cauchy-Riemann-1}, we get

\begin{equation}\label{point-tilde-tau-theta-quasi-Cauchy-Riemann-1}
\left\{\begin{array}{lll}
\displaystyle\omega_0\frac{\partial\tilde{\theta}}{\partial\psi}=-W_0\omega_0\frac{\partial\tilde{\tau}}{\partial\phi}-W_0\frac{\partial\tau_A}{\partial\phi}-\frac{\partial\theta_A}{\partial\psi}\\[5mm]
\displaystyle\omega_0\frac{\partial\tilde{\theta}}{\partial\phi}=\frac{\omega_0 e^{-2\mathcal{H}\theta_A(0,\psi_0)-2\omega_0\tilde{\tau}(0,\psi_0)}\delta((\phi,\psi)-(0,\psi_0))}{W_0}+\frac{1}{W_0}\left(\omega_0\frac{\partial\tilde{\tau}}{\partial\psi}\frac{\partial\tau_A}{\partial\psi}\right)+\frac{\partial\theta_A}{\partial\phi}
\end{array}\right.
\end{equation}
\medskip


\noindent If we cross systems \eqref{tauCR-thetaCR} with system \eqref{point-tilde-tau-theta-quasi-Cauchy-Riemann-1} then we obtain

\begin{equation}\label{point-tilde-theta-phi-psi}
\left\{\begin{array}{lll}
\displaystyle\omega_0\frac{\partial\tilde{\theta}}{\partial\psi}=-W_0\omega_0\frac{\partial\tilde{\tau}}{\partial\phi}+(W_0-1)\frac{\partial\theta_A}{\partial\psi}\\[5mm]
\displaystyle\omega_0\frac{\partial\tilde{\theta}}{\partial\phi}=\frac{\omega_0 e^{-2\mathcal{H}\theta_A(0,\psi_0)-2\omega_0\tilde{\tau}(0,\psi_0)}\delta((\phi,\psi)-(0,\psi_0))}{W_0}+\frac{}{W_0}\frac{\partial\tilde{\tau}}{\partial\psi}+\left(1+\frac{1}{W_0}\right)\frac{\partial\theta_A}{\partial\phi}
\end{array}\right.
\end{equation}
\medskip

\noindent By taking the derivative with respect to $\phi$ in the first equation and the derivative with respect to $\psi$ in the second equation and then the difference we get an elliptic equation

\begin{equation}\label{point-elliptic-phi-psi}
\displaystyle W_0\tilde{\varepsilon}\frac{\partial^2\tilde{\tau}}{\partial\phi^2}+\frac{1}{W_0}\tilde{\varepsilon}\frac{\partial^2\tilde{\tau}}{\partial\psi^2}+\frac{\omega_0 e^{-2\tau(0,\psi_0)}}{W_0}\frac{\partial}{\partial\psi}\delta(\phi,\psi-\psi_0) +\left(\frac{1-W_0^2}{W_0}\right)\frac{\partial^2\theta_A}{\partial\phi\partial\psi}=0
\end{equation}
\medskip

%
%
\noindent We can do the same as \eqref{point-theta-tau-quasi-Cauchy-Riemann-1}, \eqref{point-tilde-tau-theta-quasi-Cauchy-Riemann-1} and \eqref{point-tilde-theta-phi-psi} also in the variables $(\alpha,\rho)$ and the elliptic equation is the following

\begin{equation*}\label{point-elliptic-alpha-rho}
\begin{split}
&\frac{\rho}{W_0}\omega_0\frac{\partial^2\tilde{\tau}}{\partial\rho^2}+\frac{1}{W_0}\omega_0\frac{\partial\tilde{\tau}}{\partial\rho}+\frac{W_0}{\rho}\omega_0\frac{\partial^2\tilde{\tau}}{\partial\alpha^2}+\frac{\partial}{\partial\rho}\left(\frac{\omega_0 e^{-2\mathcal{H}\theta_A-2\omega_0\tilde{\tau}}}{W_0}\right)+\left(W_0-\frac{1}{W_0}\right)\frac{\partial^2\theta_A}{\partial\alpha\partial\rho}=0.
\end{split}
\end{equation*}
\medskip

\noindent Once we solve the elliptic equation we have a solution $\tilde{\tau}$ as a function of $\theta_A$ and thanks to the relations \eqref{point-tilde-theta-phi-psi} also $\tilde{\theta}$ is a function of $\theta_A$.

\subsection{The elliptic problem}
\noindent In this section we want to show how to solve the elliptic problem. For simplicity, we will study the problem in the $(\phi,\psi)$ coordinates thus, from \eqref{point-elliptic-phi-psi}, the system is

\begin{equation*}\label{point-elliptic-phi-psi-system}
\displaystyle W_0\omega_0\frac{\partial^2\tilde{\tau}}{\partial\phi^2}+\frac{1}{W_0}\omega_0\frac{\partial^2\tilde{\tau}}{\partial\psi^2}+\frac{\omega_0 e^{-2\tau(0,\psi_0)}}{W_0}\frac{\partial}{\partial\psi}\delta(\phi,\psi-\psi_0) +\left(\frac{1-W_0^2}{W_0}\right)\frac{\partial^2\theta_A}{\partial\phi\partial\psi}=0.
\end{equation*}
\medskip

\noindent The equation above is a linear elliptic equation with constant coefficients $\displaystyle W_0, \frac{1}{W_0}$. If we do a change of variables we obtain a Poisson equation. In the specific if we define $\phi=W_0\phi'$, then we have

\begin{equation*}
\begin{split}
&\frac{\partial f}{\partial\phi'}(\phi',\psi)=\frac{\partial f}{\partial\phi}\frac{\partial\phi}{\partial\phi'}=W_0\frac{\partial f}{\partial\phi}\Longrightarrow\frac{\partial f}{\partial\phi}=\frac{1}{W_0}\frac{\partial f}{\partial\phi'}\\[3mm]
&\frac{\partial^2 f}{\partial\phi^2}=\frac{1}{W_0^2}\frac{\partial^2 f}{\partial\phi'^2}.
\end{split}
\end{equation*}
\medskip

\noindent And the domain $\tilde{\Omega'}=\{(\phi',\psi):-\frac{\pi}{W_0}<\phi<\frac{\pi}{W_0}, -\infty<\psi<0\}$. By substituting in \eqref{point-elliptic-phi-psi}, we have

\begin{equation}\label{point-elliptic-phi-psi-NEW}
\displaystyle \omega_0\frac{\partial^2\tilde{\tau}}{\partial\phi'^2}+\omega_0\frac{\partial^2\tilde{\tau}}{\partial\psi^2}+\omega_0 e^{-2\tau(0,\psi_0)}\frac{\partial}{\partial\psi}\delta(\phi,\psi-\psi_0) +\left(\frac{1-W_0^2}{W_0}\right)\frac{\partial^2\theta_A}{\partial\phi'\partial\psi}=0
\end{equation}
\medskip

\noindent Since we are looking for $\tau\in H^2$ and we know that $\tilde{\tau}\in H^2$ so that its Laplacian is in $L^2(\tilde{\Omega'})$; then by the elliptic theory there exists a weak solution and  so we can invert the Laplace operator, \cite[Theorem 9.25]{Brezis}. We have

\begin{equation}\label{point-tilde-tau}
\omega_0\tilde{\tau}=\left(-\omega_0 e^{-2\tau(0,\psi_0)}\frac{\partial}{\partial\psi}\delta(\phi,\psi-\psi_0) -\left(\frac{1-W_0^2}{W_0}\right)\frac{\partial^2\theta_A}{\partial\phi'\partial\psi}\right)*G_2(\phi',\psi),
\end{equation}
\medskip

\noindent where $G_2$ is the Green function of the Poisson equation in $\tilde{\Omega'}$. 

\subsection{Existence of gravity rotational perturbed Crapper waves}\label{point-existence-Crapper}

\noindent The main theorem we want to prove is the following 

\begin{theorem}\label{point-existence}
Let us consider the water waves problem \eqref{v-euler}, with a small point vortex and a small gravity $g$. Then, for some values of $A<A_0$, defined in  \eqref{Crapper-solutions}, there exist periodic solutions to \eqref{v-euler} with 
overhanging profile.
\end{theorem}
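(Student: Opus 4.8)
The plan is to solve the reduced Bernoulli equation \eqref{point-Bernoulli} by the implicit function theorem, taking the pure capillary Crapper wave \eqref{Crapper-solutions} as the base point and treating the gravity parameter $p=g/c^2$ and the point-vortex strength $\omega_0$ as the two small parameters. First I would fix a value $A$ in a nonempty open subset of $(-A_0,A_0)$ for which the Crapper interface $z^c$, reconstructed from \eqref{Crapper-z-derivative}, is overhanging but not self-intersecting; such values form an open interval and their existence is classical. This choice fixes $q=\tfrac{1+A^2}{1-A^2}$ and hence the base profile $\theta_c$, which solves \eqref{Crapper-Bernoulli-only-theta} with $p=\omega_0=0$ and Bernoulli constant $B=0$. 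The point vortex is placed at the interior location $(0,\rho_0)$, $\rho_0<1$, which for the chosen $A$ stays off the interface, so the singular kernel in \eqref{velocity-point} contributes a quantity that is smooth along $z^c$ and, by openness, along nearby profiles.

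The next step is the reduction to a single scalar equation. The elliptic solve \eqref{point-tilde-tau} expresses $\omega_0\tilde{\tau}$, and through the relations \eqref{point-tilde-theta-phi-psi} also $\omega_0\tilde{\theta}$, as explicit functionals of $\theta_A$ (with $\tau_A=\mathcal{H}\theta_A$); substituting these into \eqref{point-Bernoulli} produces a map
\[
F(\theta_A,B;\,p,\omega_0)=\text{ left-hand side of \eqref{point-Bernoulli}},
\]
defined on a neighborhood of $(\theta_c,0;0,0)$ in $H^{k}_{\mathrm{odd}}\times\mathbb{R}\times\mathbb{R}\times\mathbb{R}$ and valued in $H^{k-1}_{\mathrm{even}}$ (since $\theta$ is odd, $\tau=\mathcal{H}\theta$ is even, and every term of the Bernoulli expression is then even). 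By construction $F(\theta_c,0;0,0)=0$. Smoothness of $F$ in all its arguments follows from the smoothing character of the Birkhoff–Rott operator and of the Green's-function convolution entering the vorticity correction, together with the fact that the point-vortex term depends analytically on $\theta_A$ as long as the interface avoids the vortex.

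The crux is the invertibility of $D_{(\theta_A,B)}F(\theta_c,0;0,0)$. Linearizing \eqref{Crapper-Bernoulli-only-theta}, this differential acts as
\[
(\delta\theta,\delta B)\longmapsto \cosh(\mathcal{H}\theta_c)\,\mathcal{H}\delta\theta+q\,\partial_\alpha\delta\theta-\delta B\,e^{-\mathcal{H}\theta_c},
\]
an operator from $H^{k}_{\mathrm{odd}}\times\mathbb{R}$ to $H^{k-1}_{\mathrm{even}}$. I would establish that it is an isomorphism in two steps. Surjectivity: the purely differential part $\cosh(\mathcal{H}\theta_c)\mathcal{H}+q\,\partial_\alpha$ misses a one-dimensional cokernel (the mean value / constants), and the scalar direction $-\delta B\,e^{-\mathcal{H}\theta_c}$ supplies exactly that missing direction, so the inclusion of $B$ as an unknown is essential. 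Injectivity: this amounts to nondegeneracy of the Crapper branch at the chosen $A$, which for $|A|<A_0$ (away from the secondary bifurcation / self-intersection regime) is known from \cite{OS2001} and \cite{AAW2013}. Controlling the variable coefficient $\cosh(\mathcal{H}\theta_c)$ and the interaction of $\mathcal{H}$ with $\partial_\alpha$ on the odd/even subspaces is the main obstacle, and is where the argument is genuinely delicate.

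Granting the isomorphism, the implicit function theorem produces a smooth branch $(\theta_A(p,\omega_0),B(p,\omega_0))$ solving $F=0$ for all sufficiently small $(p,\omega_0)$, with $\theta_A(0,0)=\theta_c$. From $\theta_A$ one recovers $\tau_A=\mathcal{H}\theta_A$, the corrections $\omega_0\tilde\tau,\omega_0\tilde\theta$, and then the interface by integrating the tangent vector furnished by \eqref{v-tangential} (the rotational analogue of \eqref{Crapper-z-derivative}), which yields a genuine solution of \eqref{v-euler}. Finally, being overhanging is an open condition on $z$ — it is equivalent to $z_1'$ changing sign, a property stable under $C^1$-small perturbations — and the IFT gives $z(\,\cdot\,;p,\omega_0)\to z^c$ in $C^1$ as $(p,\omega_0)\to(0,0)$. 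Hence for $p$ and $\omega_0$ small enough the perturbed wave is still overhanging, which proves the theorem.
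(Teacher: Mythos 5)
Your treatment of the Bernoulli equation is essentially a repackaging of the paper's argument: where the paper performs a Lyapunov--Schmidt reduction (projecting off $\cos\theta_c$ via $\Pi$, applying the implicit function theorem to $\tilde{\mathcal{F}}$, and then adjusting the Bernoulli constant through the scalar function $f(B;p,\omega_0)$ with $D_Bf=(\cos\theta_c,e^{-\mathcal{H}\theta_c})=-2\pi$), you absorb $B$ into the unknowns and invert the bordered operator $(\delta\theta,\delta B)\mapsto\Gamma\delta\theta-\delta B\,e^{-\mathcal{H}\theta_c}$ in one stroke; these two routes are equivalent. Two corrections there: the cokernel of $\Gamma$ is not ``the mean value / constants'' but the span of $\cos\theta_c$ (Lemma \ref{DF1-invertible}), and the isomorphism claim therefore rests on the explicit nonvanishing of the pairing $(\cos\theta_c,e^{-\mathcal{H}\theta_c})=-2\pi$, not merely on the existence of some missing one-dimensional direction.

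The genuine gap is that you have discarded half of the system. The problem has two unknown functions, not one: besides $\theta_A$ there is the vortex-sheet strength $\tilde{\omega}$ along the free surface, which enters the stream function \eqref{harmonic-point-stream} and hence the velocity \eqref{velocity-point}. Solving the Bernoulli equation \eqref{point-Bernoulli} alone does not produce a solution of \eqref{v-euler}: one must also impose the kinematic/parametrization condition \eqref{v-tangential}, i.e.\ the operator equation $\mathcal{F}_2=0$ in \eqref{point-water-waves-2}, which determines $\tilde{\omega}$ and guarantees that the pair $(\tau,\theta)$ you constructed really is the log-modulus and argument of a velocity field with the prescribed point-vortex singularity and $v\cdot n=0$ on $\mathcal{S}$. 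Your closing step --- ``integrating the tangent vector furnished by \eqref{v-tangential}'' --- misreads that equation: \eqref{v-tangential} is a nonlinear constraint on $\tilde{\omega}$ (through the Birkhoff--Rott integral hidden in $v(z(\alpha))$), not a formula for $\partial_\alpha z$; the tangent vector formula is \eqref{point-z}, and the consistency of \eqref{point-z} with the actual fluid velocity is precisely what $\mathcal{F}_2=0$ encodes. Solving this second equation requires the second diagonal entry of the paper's triangular Fr\'echet matrix \eqref{point-frechet-derivative}, namely the invertibility of $\mathcal{I}+\mathcal{A}(z^c)$, where $\mathcal{A}(z^c)\omega_1=2BR(z^c,\omega_1)\cdot\partial_{\alpha}z^c$ is a compact operator with all eigenvalues strictly smaller than $1$ in absolute value (Lemma \ref{DF2-invertible}, from \cite{CCG2011}); this in turn uses that $z^c$ has no self-intersections, i.e.\ $|A|<A_0$. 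Without this component your argument produces an interface but no fluid flow filling $\Omega$, so it does not prove the theorem.
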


\noindent In order to prove the existence of perturbed rotational Crapper waves we will apply the implitic function theorem around the Crapper solutions. 

\begin{theorem}[Implicit function theorem]\label{IFT}
Let $X, Y, Z$ be Banach spaces and $\zeta:X\times Y\rightarrow Z$ is a $C^k$, with $k\geq 1$. If $\zeta(x_*, y_*)=0$ and $D_x\zeta(x_*, y_*)$ is a bijection from $X$ to $Z$, then there exists $\varepsilon>0$ and a unique $C^k$ map $\chi:Y\rightarrow X$ such that $\chi(y_*)=x_*$ and $\zeta(\chi(y_*), y_*)=0$ when $\|y-y_*\|_Y\leq\varepsilon$.
\end{theorem}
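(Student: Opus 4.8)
The plan is to reduce the statement to a fixed-point problem solvable by the contraction mapping principle, and then to bootstrap regularity. Write $L := D_x\zeta(x_*,y_*)$, a bounded linear bijection from $X$ onto $Z$; since $\zeta$ is $C^1$, $L$ is continuous, and by the bounded inverse theorem (a consequence of the open mapping theorem, valid because $X,Z$ are Banach) its inverse $L^{-1}\colon Z\to X$ is bounded. The key observation is that for fixed $y$ the equation $\zeta(x,y)=0$ is equivalent to $x$ being a fixed point of the map
\[
\Phi(x,y) := x - L^{-1}\zeta(x,y),
\]
because $L^{-1}$ is injective. Thus solving $\zeta(\chi(y),y)=0$ amounts to finding, for each $y$ near $y_*$, a fixed point of $\Phi(\cdot,y)$ in a suitable ball around $x_*$.

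First I would establish the contraction property. Differentiating in $x$ gives $D_x\Phi(x,y) = \mathrm{Id}_X - L^{-1}D_x\zeta(x,y)$, which vanishes at $(x_*,y_*)$ since $L^{-1}D_x\zeta(x_*,y_*)=L^{-1}L=\mathrm{Id}_X$. Because $\zeta\in C^1$, the map $(x,y)\mapsto D_x\Phi(x,y)$ is continuous, so there are $r>0$ and $\varepsilon_0>0$ with $\|D_x\Phi(x,y)\|\le \tfrac12$ whenever $\|x-x_*\|_X\le r$ and $\|y-y_*\|_Y\le \varepsilon_0$; by the mean value inequality $\Phi(\cdot,y)$ is then a $\tfrac12$-Lipschitz contraction on the closed ball $\overline{B}(x_*,r)$. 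Next, shrinking $\varepsilon\le\varepsilon_0$, continuity of $\zeta$ together with $\zeta(x_*,y_*)=0$ gives $\|\Phi(x_*,y)-x_*\|_X=\|L^{-1}\zeta(x_*,y)\|_X\le r/2$ for $\|y-y_*\|_Y\le\varepsilon$, so for such $y$ the contraction $\Phi(\cdot,y)$ maps $\overline{B}(x_*,r)$ into itself. The Banach fixed-point theorem then yields a unique $\chi(y)\in\overline{B}(x_*,r)$ with $\Phi(\chi(y),y)=\chi(y)$, i.e. $\zeta(\chi(y),y)=0$; uniqueness within the ball forces $\chi(y_*)=x_*$, and the uniform contraction constant gives the bound $\|\chi(y)-\chi(y')\|_X\le 2\|L^{-1}\|\,\|\zeta(\chi(y'),y)-\zeta(\chi(y'),y')\|_Z$, whence continuity of $\chi$ follows from continuity of $\zeta$ in its second argument.

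The last, and I expect hardest, step is to upgrade $\chi$ from continuous to $C^k$. After shrinking $r$ so that $D_x\zeta(x,y)$ stays invertible on the ball (the invertible operators form an open set on which inversion is smooth), I would show $\chi$ is differentiable with
\[
D\chi(y) = -\,[D_x\zeta(\chi(y),y)]^{-1}\,D_y\zeta(\chi(y),y),
\]
by expanding $0=\zeta(\chi(y+h),y+h)-\zeta(\chi(y),y)$ to first order and using the already-established continuity of $\chi$ to control the remainder. Granting this formula, a standard bootstrap finishes the proof: its right-hand side is a composition of $\chi$ with the $C^{k-1}$ maps $D_x\zeta$, $D_y\zeta$ and the smooth inversion map $A\mapsto A^{-1}$, so if $\chi\in C^{j}$ with $j<k$ then $D\chi\in C^{j}$, i.e. $\chi\in C^{j+1}$, and induction gives $\chi\in C^{k}$. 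The main obstacle is making this bootstrap rigorous — in particular justifying that inversion of the operator-valued derivative is as smooth as needed and that all the compositions land in the correct regularity class — rather than the fixed-point existence, which is routine.
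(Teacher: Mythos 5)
The paper contains no proof of this statement: Theorem \ref{IFT} is the classical implicit function theorem in Banach spaces, quoted as a black-box tool (it is later applied to the operator $\tilde{\mathcal{F}}$ of \eqref{tilde-mathcal-F} and to the scalar function $f(B;p,\omega_0)$), so there is no internal argument to compare yours against. Your proposal is the standard contraction-mapping proof, and it is essentially correct: the reduction of $\zeta(x,y)=0$ to a fixed point of $\Phi(x,y)=x-L^{-1}\zeta(x,y)$ (legitimate because $L^{-1}$ exists and is bounded by the open mapping theorem), the smallness of $D_x\Phi$ near $(x_*,y_*)$ giving a uniform $\tfrac12$-contraction via the mean value inequality, the self-mapping of $\overline{B}(x_*,r)$ after shrinking $\varepsilon$, the Lipschitz estimate yielding continuity of $\chi$, and the bootstrap from the formula $D\chi(y)=-\left[D_x\zeta(\chi(y),y)\right]^{-1}D_y\zeta(\chi(y),y)$ combined with smoothness of operator inversion (Neumann series, on the open set of invertible operators) are all sound and standard. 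Two points are worth making explicit if you write this up fully: first, the uniqueness asserted in the theorem must be read locally --- your argument gives uniqueness of $\chi(y)$ only among solutions lying in $\overline{B}(x_*,r)$, which is the correct interpretation of the statement (as written in the paper the statement is slightly sloppy, e.g.\ ``$\zeta(\chi(y_*),y_*)=0$'' should be ``$\zeta(\chi(y),y)=0$''); second, the differentiability step needs the remainder estimate $\zeta(\chi(y+h),y+h)-\zeta(\chi(y),y)=D_x\zeta\cdot(\chi(y+h)-\chi(y))+D_y\zeta\cdot h+o(\|\chi(y+h)-\chi(y)\|+\|h\|)$ together with the a priori Lipschitz bound on $\chi$ (which you established) to convert the $o$-term into $o(\|h\|)$; without that Lipschitz control the expansion alone does not close. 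Since you have both ingredients, the proof goes through.
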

\medskip

\noindent The operators that identify the water waves problem with a point vortex are the following 

\begin{subequations}
\label{point-water-waves}
\begin{align}
\begin{split}
&\mathcal{F}_1(\theta_A,\tilde{\omega}; B,p,\omega_0):=\sinh(\mathcal{H}\theta_A+\omega_0\tilde{\tau}(\theta_A))\\[2mm]
&\hspace{2cm}-p e^{-\mathcal{H}\theta_A-\omega_0\tilde{\tau}(\theta_A)}\left(\frac{1}{W_0}\int_{-\pi}^{\alpha}e^{-\mathcal{H}\theta_A-\omega_0\tilde{\tau}(\theta_A)}\sin(\theta_A+\omega_0\tilde{\theta}(\theta_A))\,d\alpha'{-1}\right)\\[2mm]
&\hspace{2cm}+q\frac{\partial(\theta_A+\omega_0\tilde{\theta}(\theta_A))}{\partial\alpha}W_0-Be^{-\mathcal{H}\theta_A-\omega_0\tilde{\tau}(\theta_A)}
\end{split}\label{point-water-waves-1}\\[5mm]
\begin{split}
&\mathcal{F}_2(\theta_A,\tilde{\omega};B, p,\omega_0):= W_0\left(2BR(z(\alpha),\tilde{\omega}(\alpha))\cdot\partial_{\alpha}z(\alpha)+\tilde{\omega}(\alpha)\right.\\[2mm]
&\hspace{2cm}\left.+\frac{\omega_0}{\pi}\frac{(z_2(\alpha),-z_1(\alpha))}{|z(\alpha)|^2}\cdot\partial_{\alpha}z(\alpha)\right)+2
\end{split}\label{point-water-waves-2}
\end{align}
\end{subequations}
\medskip

\noindent We have that 

$$(\mathcal{F}_1,\mathcal{F}_2)(\theta_A,\tilde{\omega};B, p,\omega_0): H^{2}_{odd}\times H^{1}_{even}\times\mathbb{R}^{{3}}\rightarrow H^{1}_{even}\times H^{1}_{even}.$$
\medskip

\subsubsection{Proof of Theorem \ref{point-existence}}\label{point-existence-proof}
We have to analyze the two operators. First we have to show that the operators are zero when computed at the point $(\theta_c,\tilde{\omega}_c;0,0,0)$.

\begin{equation}\label{F1}
\mathcal{F}_1(\theta_c,\tilde{\omega}_c;0,0,0)=\sinh(\mathcal{H}\theta_c)+q\frac{\partial\theta_c}{\partial\alpha}=0,
\end{equation}

\noindent since this is exactly \eqref{Crapper-Bernoulli-only-theta}.
\medskip

\noindent The second operator related to the kinematic boundary conditions satisfies

\begin{equation}\label{F2}
\mathcal{F}_2(\theta_c,\tilde{\omega}_c;0,0,0)=
2BR(z^c({\alpha}),\tilde{\omega}_c(\alpha))+\tilde{\omega}_c(\alpha)+2=0,
\end{equation}
\medskip

\noindent where $z^c(\alpha)$ is the parametrization of the Crapper interface and it is zero by construction \eqref{phi-parametrization}. 
\medskip

\noindent Now, we compute all the Fr\'echet derivatives. We will take the derivatives with respect to $\theta_A$ and $\tilde{\omega}$, then we will compute them at the point $(\theta_c,\tilde{\omega}_c;0,0,0)$ and we will show their invertibility. For the operator $\mathcal{F}_1$ we observe that 

$$D_{\tilde{\omega}}\mathcal{F}_1(\theta_c,\tilde{\omega}_c,;0,0,0)=0.$$
\medskip
 
\noindent It remains to compute the derivative with respect to $\theta_A$.

\begin{equation*}
\begin{split}
&D_{\theta_A}\mathcal{F}_1=\left[\frac{d}{d\mu}\mathcal{F}_1(\theta_A+\mu\theta_1,\psi_H,c;B,p,\varepsilon,\omega_0)\right]_{|\mu=0}=\left[\frac{d}{d\mu}\left[\sinh(\mathcal{H}\theta_A+\mu\mathcal{H}\theta_1+\omega_0\tilde{\tau}(\theta_A+\mu\theta_1))\right.\right.\\[3mm]
&-p e^{-\mathcal{H}\theta_A-\mu\mathcal{H}\theta_1+\omega_0\tilde{\tau}(\theta_A+\mu\theta_1)}\left(\frac{1}{W_0}\int_{-\pi}^{\alpha}e^{-\mathcal{H}\theta_A+\mu\mathcal{H}\theta_1+\omega_0\tilde{\tau}(\theta_A+\mu\theta_1)}\sin(\theta_A+\mu\theta_1+\omega_0
\tilde{\theta})\,d\alpha'{-1}\right)\\[2mm]
&\left.\left.+q\frac{\partial(\theta_A+\mu\theta_1+\omega_0\tilde{\theta})}{\partial\alpha}W_0-Be^{-\mathcal{H}\theta_A-\mu\mathcal{H}\theta_1+\omega_0\tilde{\tau}(\theta_A+\mu\theta_1)}\right]\right]_{|\mu=0}=\\[3mm]
&=\cosh(\mathcal{H}\theta_A+\omega_0\tilde{\tau}(\theta_A))\left(\mathcal{H}\theta_1+\omega_0\left[\frac{d}{d\mu}\tilde{\tau}(\theta_A+\mu\theta_1)\right]_{|\mu=0}\right)\\[3mm]
&-p\left[\frac{d}{d\mu}\left[e^{-\mathcal{H}\theta_A-\mu\mathcal{H}\theta_1+\omega_0\tilde{\tau}(\theta_A+\mu\theta_1)}\cdot\right.\right.\\[3mm]
&\hspace{2cm}\left.\left.\cdot\left(\frac{1}{W_0}\int_{-\pi}^{\alpha}e^{-\mathcal{H}\theta_A+\mu\mathcal{H}\theta_1+\omega_0\tilde{\tau}(\theta_A+\mu\theta_1)}\sin(\theta_A+\mu\theta_1+\omega_0
\tilde{\theta})\,d\alpha'{-1}\right)\right]\right]_{|\mu=0}\\[3mm]
&+qW_0\frac{\partial\theta_1}{\partial\alpha}-B\left[\frac{d}{d\mu}\left[e^{-\mathcal{H}\theta_A-\mu\mathcal{H}\theta_1+\omega_0\tilde{\tau}(\theta_A+\mu\theta_1)}\right]\right]_{|\mu=0}
\end{split}
\end{equation*}
\medskip

\noindent In order to compute $\frac{d}{d\mu}\tilde{\tau}$ we refer to \eqref{point-tilde-tau} and since it is multiply by $\omega_0$ that we will take equal to zero, then it will desapper as well as the terms multiplied by $p, B$. Thus the Fr\'echet derivative computed at $(\theta_c,\tilde{\omega}_c;0,0,0)$ is

\begin{equation}\label{D-theta-F1}
D_{\theta_A}\mathcal{F}_1(\theta_c,\tilde{\omega}_c;0,0,0)=\cosh(\mathcal{H}\theta_c)\mathcal{H}\theta_1+q\frac{\partial\theta_1}{\partial\alpha}. 
\end{equation}
\bigskip

\noindent The Fr\'echet derivative with respect to $\theta_A$, can be obtained by substituting the definition of the interface $z(\alpha)$ to the operator. Indeed, from the equations \eqref{x-y-Jacobian}, we get

\begin{equation}\label{interface-derivative}
\left\{\begin{array}{lll}
\displaystyle\frac{\partial z_1}{\partial\alpha}=-\frac{e^{-\tau(\alpha,1)}\cos(\theta(\alpha,1))}{W(\alpha,1)}\\[4mm]
\displaystyle\frac{\partial z_2}{\partial\alpha}=-\frac{e^{-\tau(\alpha,1)}\sin(\theta(\alpha,1))}{W(\alpha,1)},
\end{array}\right.
\end{equation}
\medskip

\noindent By substituting the value of $W(\alpha,1)$ for the point vortex and by rewriting $\tau, \theta$ as the sum of Crapper and a perturbation, then we have

\begin{equation}\label{point-interface}
\left\{\begin{array}{lll}
\displaystyle z_1(\alpha)=-\frac{1}{W_0}\int_{-\pi}^{\alpha} e^{-\mathcal{H}\theta_A-\omega_0\tilde{\tau}(\theta_A)}\cos(\theta_A+\omega_0
\tilde{\theta}(\theta_A))\,d\alpha'\\[4mm]
\displaystyle z_2(\alpha)=-\frac{1}{W_0}\int_{-\pi}^{\alpha} e^{-\mathcal{H}\theta_A-\omega_0\tilde{\tau}(\theta_A)}\sin(\theta_A+\omega_0
\tilde{\theta}(\theta_A))\,d\alpha'-1
\end{array}\right.
\end{equation}
\medskip

\noindent In a compact way, the interface $z(\alpha)$ is

\begin{equation}\label{point-z}
\displaystyle z(\alpha)=-\frac{1}{W_0}\int_{-\pi}^{\alpha} e^{-\mathcal{H}\theta_A-\omega_0\tilde{\tau}(\theta_A)+i(\theta_A+\omega_0\tilde{\theta}(\theta_A))}\,d\alpha' -e_2.
\end{equation}
\medskip

\noindent The main Fr\'echet derivative for the operator $\mathcal{F}_2$ is with respect to $\tilde{\omega}$.

\begin{equation*}
\begin{split}
&D_{\tilde{\omega}}\mathcal{F}_2=\left[\frac{d}{d\mu}\mathcal{F}_2(\theta_A,\tilde{\omega}+\mu\omega_1;B,p,\omega_0)\right]_{|\mu=0}=\left[\frac{d}{d\mu}\left[2 W_0 BR(z(\alpha),\tilde{\omega}(\alpha)+\mu\omega_1)\cdot\partial_{\alpha}z(\alpha)\right.\right.\\[3mm]
&\left.\left.+W_0 (\tilde{\omega}(\alpha)+\mu\omega_1(\alpha))+W_0\frac{\omega_0}{\pi}\frac{(z_2(\alpha),-z_1(\alpha))}{|z(\alpha)|^2}\cdot\partial_{\alpha}z(\alpha)+2\right]\right]_{|\mu=0}\\[3mm]
&=2W_0 \textrm{P.V.}\frac{1}{2\pi}\int_{-\pi}^{\pi}\frac{(z(\alpha)-z(\alpha'))^{\perp}}{|z(\alpha)-z(\alpha')|^2}\cdot \omega_1(\alpha')\,d\alpha'\cdot \partial_{\alpha}z(\alpha)+W_0\omega_1(\alpha).
\end{split}
\end{equation*}
\medskip

\noindent When we compute this derivative at the point $(\theta_c,\tilde{\omega}_c;0,0,0)$ we get

\begin{equation}\label{D-omega-F2}
D_{\tilde{\omega}}\mathcal{F}_2(\theta_c,\tilde{\omega}_c;0,0,0)=2BR(z^c(\alpha),\omega_1(\alpha))\cdot \partial_{\alpha}z^c(\alpha)+\omega_1(\alpha),
\end{equation}
\medskip

\noindent where $z^c(\alpha)$ is the parametrization of the Crapper interface coming from \eqref{Crapper-z-derivative}. 
\bigskip

\noindent The final step of this proof is to show the invertibility of the derivative's matrix, defined as follows

\begin{equation}\label{point-frechet-derivative}
D\mathcal{F}(\theta_c,\tilde{\omega}_c;0,0,0)=
\begin{pmatrix}
D_{\theta_A}\mathcal{F}_1 & 0  \\
D_{\theta_A}\mathcal{F}_2 &   D_{\tilde{\omega}}\mathcal{F}_2 
\end{pmatrix}=
\begin{pmatrix}
\Gamma & 0 \\
D_{\theta_A}\mathcal{F}_2 & \mathcal{A}(z^c(\alpha))+\mathcal{I}
\end{pmatrix}\cdot 
\begin{pmatrix}
\theta_1\\
\omega_1
\end{pmatrix}
\end{equation} 
\medskip

\noindent where 

\begin{align*}
&\Gamma\theta_1=\cosh(\mathcal{H}\theta_c)\mathcal{H}\theta_1+q\frac{d}{d\alpha}\theta_1\\[3mm]
&(\mathcal{A}(z^c(\alpha))+\mathcal{I})\omega_1=2BR(z^c(\alpha),\omega_1)\cdot\partial_{\alpha}z^c(\alpha)+\omega_1.
\end{align*}
\medskip

\noindent The invertibility of \eqref{point-frechet-derivative} is related with the invertibility of the diagonal, since the matrix is triangular. Hence we have to analyze the invertibility of the operators $\Gamma $ and $ \mathcal{A}+\mathcal{I}$, where $\mathcal{I}$ stays for the identity operator. Below, we resume the properties of the $\Gamma$ operator, for details, see in \cite{AAW2013} and \cite{CEG2016}. 

\begin{lemma}
The operator 
$$D_{\theta_A}\mathcal{F}_1(\theta_c, \tilde{\omega}_c;0,0,0)=\cosh(\mathcal{H}\theta_c)\theta_1+q\frac{d}{d\alpha}\theta_1=\Gamma\theta_1,$$
defined $\Gamma: H^{1}_{odd}\rightarrow L^{2}_{even}$ is injective.
\end{lemma}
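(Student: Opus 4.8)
The plan is to read $\Gamma$ as the linearization at the Crapper solution of the pure-capillary operator $\theta\mapsto \sinh(\mathcal{H}\theta)+q\,\partial_\alpha\theta$ appearing in \eqref{Crapper-Bernoulli-only-theta}; consistently with \eqref{D-theta-F1} and with the codomain, the relevant operator is
\begin{equation*}
\Gamma\theta_1=\cosh(\mathcal{H}\theta_c)\,\mathcal{H}\theta_1+q\,\frac{\partial\theta_1}{\partial\alpha},
\end{equation*}
which indeed sends $H^1_{odd}$ into $L^2_{even}$ because $\mathcal{H}$ turns an odd function into an even one and $\cosh(\mathcal{H}\theta_c)$ is even. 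The first observation I would record is that differentiating \eqref{Crapper-Bernoulli-only-theta} in $\alpha$ and using $[\mathcal{H},\partial_\alpha]=0$ gives $\Gamma\theta_c'=0$; however $\theta_c'$ is \emph{even}, so it does not belong to the domain $H^1_{odd}$. Thus the translation zero mode is automatically removed by the parity constraint, and the whole content of the lemma is that the kernel of $\Gamma$ inside the odd sector is trivial.

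To exploit the explicit solution I would pass to Fourier series. Using \eqref{Crapper-solutions} at $\rho=1$ (so $e^{-iw}=e^{i\alpha}$), together with $q=\frac{1+A^2}{1-A^2}$ from \eqref{surface-tension} and the relation $-\tau_c+i\theta_c=if_c$ behind \eqref{Crapper-z-derivative}, one computes the explicit, strictly positive even coefficient
\begin{equation*}
\cosh(\mathcal{H}\theta_c)=\frac{(1+A^2)^2+4A^2\cos^2\alpha}{(1+A^2)^2-4A^2\cos^2\alpha}.
\end{equation*}
Writing $\theta_1=\sum_{k\ge1}a_k\sin k\alpha$, so that $\mathcal{H}\theta_1=-\sum_{k\ge1}a_k\cos k\alpha$ and $\partial_\alpha\theta_1=\sum_{k\ge1}ka_k\cos k\alpha$, I would clear the denominator in the kernel equation $\Gamma\theta_1=0$; since numerator and denominator are trigonometric polynomials of degree two in $\alpha$ (only a $\cos 2\alpha$ appears), multiplication shifts Fourier modes by $\pm2$ and the equation collapses to a three-term recursion relating $a_{k-2}$, $a_k$, $a_{k+2}$. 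The $H^1$ membership of $\theta_1$ forces summability of $\{ka_k^2\}$, which eliminates the exponentially growing branch of the recursion, and the low-mode closure relation (the folding at $k=1$) is then used to conclude $a_k=0$ for every $k$.

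The main obstacle is that injectivity is genuinely a property of $A\neq0$, so no soft argument can work. Indeed, at $A=0$ one has $q=1$ and $\cosh(\mathcal{H}\theta_c)=1$, and then $\Gamma_0\theta_1=\mathcal{H}\theta_1+\partial_\alpha\theta_1$ satisfies $\Gamma_0\sin\alpha=-\cos\alpha+\cos\alpha=0$: the first odd mode lies in the kernel, so $\Gamma$ fails to be injective at $A=0$. Consequently the natural energy pairing is useless here, since testing $\Gamma\theta_1=0$ against $\mathcal{H}\theta_1$ produces $\int\cosh(\mathcal{H}\theta_c)(\mathcal{H}\theta_1)^2>0$ against $q\langle\partial_\alpha\theta_1,\mathcal{H}\theta_1\rangle<0$, two terms of opposite sign with no definite balance, reflecting exactly the competition between the dispersive term $q\,\partial_\alpha$ and the nonlocal multiplier $\cosh(\mathcal{H}\theta_c)\,\mathcal{H}$. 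I would break the degeneracy quantitatively: a Lyapunov--Schmidt reduction around $A=0$ (where the corrections to both $q$ and $\cosh(\mathcal{H}\theta_c)$ start at order $A^2$) gives a nonzero obstruction proportional to $\cos\alpha$ for the mode $\sin\alpha$, so the odd kernel is trivial for small $A\neq0$; the harder and decisive step is to propagate this to the larger values $A<A_0$ required for the profile to overhang, which is where explicit control of the three-term recursion (or an equivalent global continuation in $A$ monitoring that no eigenvalue returns to the origin) becomes essential. This is the step I expect to be the crux, and the one for which the explicit Crapper formulae, rather than general elliptic theory, are indispensable.
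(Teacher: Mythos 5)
Your structural observations are sound, and in two places sharper than the statement itself: you correctly read the operator as $\cosh(\mathcal{H}\theta_c)\mathcal{H}\theta_1+q\,\partial_\alpha\theta_1$ (the statement's $\cosh(\mathcal{H}\theta_c)\theta_1$ is a typo, cf.\ \eqref{D-theta-F1}); your explicit formula for $\cosh(\mathcal{H}\theta_c)$ is correct; and your remark that injectivity genuinely fails at $A=0$ (where $q=1$ and $\Gamma_0\sin\alpha=-\cos\alpha+\cos\alpha=0$) is a real subtlety which the lemma, stated with no restriction on $A$, glosses over. Nevertheless the proposal has a genuine gap: it never establishes the one thing the lemma asserts, namely that the \emph{odd} kernel is trivial. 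Observing $\Gamma\theta_c'=0$ with $\theta_c'$ even removes one candidate kernel element but says nothing about whether other, odd, kernel elements exist; for that you need a characterization of the full kernel. Both of your substitutes for such a characterization are left unexecuted: the three-term recursion is set up but the decisive step (``the low-mode closure relation is then used to conclude $a_k=0$'') is asserted rather than proved, and your Lyapunov--Schmidt argument near $A=0$ only covers small $A\neq 0$, with the extension to the overhanging range $|A|<A_0$ explicitly deferred as ``the crux.'' An argument that ends by naming its own missing step is not a proof.

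The paper closes exactly this gap by citation rather than computation: it invokes \cite[Lemma 2.1]{OS2001}, which says that for the Crapper solutions $\Gamma\theta_1=0$ if and only if $\theta_1$ is, up to constants, a multiple of $d\theta_c/d\alpha$; injectivity on $H^1_{odd}$ then follows at once from parity, since $d\theta_c/d\alpha$ and the constants are even. So the parity step you found is indeed the paper's argument, but it rests on an external nondegeneracy result for the linearized Crapper operator. To make your route self-contained you would have to actually close the recursion (or run a continuation in $A$ showing no odd eigenvalue returns to zero for $0<|A|<A_0$) --- precisely the analysis you left open, and precisely what the Okamoto--Shoji lemma supplies.
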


\begin{proof}
The injectivity follows from the fact that $\Gamma\theta_1=0$ if and only if $\displaystyle\theta_1=\frac{d\theta_c}{d\alpha}$, see \cite[Lemma 2.1]{OS2001}. Moreover, we know that $\theta_c$ is an odd function then $\displaystyle\frac{d\theta_c}{d\alpha}$ is even. This statement implies that the constants are the only trivial solutions of $\Gamma\theta_1=0$.
\end{proof}

\noindent The problem concerning the invertibility of this operator is related with its surjectivity. 

\begin{lemma}\label{DF1-invertible}
Let $f\in L^{2}_{even}$. Then there exists $\theta_1\in H^{1}_{odd}$ with $\Gamma\theta_1=f$ if and only if 
$$(f,\cos\theta_c)=\int_{-\pi}^{\pi} f(\alpha)\cos\theta_c(\alpha)\,d\alpha=0$$
\end{lemma}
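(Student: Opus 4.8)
The plan is to read this as a Fredholm-alternative statement for the first-order operator $\Gamma$ and to identify its cokernel explicitly with the span of $\cos\theta_c$. Recall that $\Gamma\theta_1 = \cosh(\mathcal{H}\theta_c)\mathcal{H}\theta_1 + q\,\partial_\alpha\theta_1$, acting $H^1_{odd}\to L^2_{even}$.

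First I would establish that $\Gamma$ is Fredholm of index $-1$. Write $\Gamma = q\,\partial_\alpha + K$ with $K=\cosh(\mathcal{H}\theta_c)\mathcal{H}$. On the Fourier side $\partial_\alpha$ sends an odd function $\sum_{n\ge1}b_n\sin(n\alpha)$ to the even, mean-zero function $\sum_{n\ge1}nb_n\cos(n\alpha)$, so $q\,\partial_\alpha:H^1_{odd}\to L^2_{even}$ has trivial kernel and a range of codimension one (the constants are missed), hence index $-1$. Since $\mathcal{H}$ is bounded on $H^1$, $\cosh(\mathcal{H}\theta_c)$ is a bounded multiplier, and $H^1\hookrightarrow L^2$ is compact, the operator $K$ is compact from $H^1_{odd}$ to $L^2_{even}$; therefore $\Gamma$ also has index $-1$ and closed range. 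Combined with the injectivity of $\Gamma$ on $H^1_{odd}$ from the previous Lemma (its kernel $\mathrm{span}\{\theta_c'\}$ is even, hence meets $H^1_{odd}$ only at $0$), this forces $\dim\mathrm{coker}\,\Gamma = 1$, and the closed-range theorem gives $\mathrm{Range}\,\Gamma = (\ker\Gamma^*)^\perp$ with $\dim\ker\Gamma^*=1$.

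The heart of the argument is to show that $\cos\theta_c$ spans this cokernel, i.e. $(\Gamma\theta_1,\cos\theta_c)=0$ for every $\theta_1\in H^1_{odd}$. I would compute the pairing by moving $\mathcal{H}$ and $\partial_\alpha$ onto $\cos\theta_c$ using the skew-adjointness of both operators. The only nontrivial input is the identity $\mathcal{H}(\cosh\tau_c\cos\theta_c) = -\sinh\tau_c\sin\theta_c$, where $\tau_c=\mathcal{H}\theta_c$. This follows from the analyticity of $f_c=\theta_c+i\tau_c$: the composite $\cos f_c = \cosh\tau_c\cos\theta_c - i\sinh\tau_c\sin\theta_c$ is again the boundary value of a function holomorphic in the disk, so its imaginary part equals the Hilbert transform of its real part up to its mean; and that mean vanishes because $\sinh\tau_c\sin\theta_c$ is odd ($\tau_c$ even, $\theta_c$ odd). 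With this identity the pairing collapses to $\int_{-\pi}^{\pi}\theta_1\,\sin\theta_c\,(\sinh\tau_c + q\,\theta_c')\,d\alpha$, which is zero by the Crapper Bernoulli equation $\sinh\tau_c + q\,\theta_c' = 0$ in \eqref{Crapper-Bernoulli}.

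Finally I would conclude both implications together. Necessity is immediate: if $f=\Gamma\theta_1$ then $(f,\cos\theta_c)=0$ by the previous step. For sufficiency, $\mathrm{Range}\,\Gamma\subseteq(\cos\theta_c)^\perp$, and both are closed subspaces of $L^2_{even}$ of codimension one, so they coincide; hence any $f$ with $(f,\cos\theta_c)=0$ lies in the range, producing the desired $\theta_1\in H^1_{odd}$. The step I expect to be the main obstacle is the verification of the Hilbert-transform identity $\mathcal{H}(\cosh\tau_c\cos\theta_c) = -\sinh\tau_c\sin\theta_c$, namely correctly tracking the holomorphy domain and the Hilbert-transform convention after the conformal passage to the disk; the remaining Fredholm bookkeeping is routine.
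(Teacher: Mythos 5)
Your proof is correct, but it is organized differently from the paper's and is in fact more complete. The paper proves only the necessity half in-text: it first records the nonlinear identity $\int_{-\pi}^{\pi}\bigl(\sinh(\mathcal{H}\theta)+q\,\partial_\alpha\theta\bigr)\cos\theta\,d\alpha=0$, valid for every $\theta$ (the $q$-term integrates to zero by periodicity, the hyperbolic term by the Cauchy integral theorem), then differentiates this identity at $\theta=\theta_c$ and uses the Crapper equation \eqref{Crapper-Bernoulli} to kill the extra term, arriving at \eqref{Gamma-coseno}; for the solvability (sufficiency) direction it defers entirely to \cite[Proposition 3.3]{AAW2013}. Your pairing computation is the linearized version of the same mechanism: skew-adjointness of $\mathcal{H}$ and $\partial_\alpha$ plus the conjugate-function identity $\mathcal{H}(\cosh\tau_c\cos\theta_c)=-\sinh\tau_c\sin\theta_c$ (holomorphy of $\cos f_c$, with vanishing mean by parity) collapse $(\Gamma\theta_1,\cos\theta_c)$ to $\int_{-\pi}^{\pi}\theta_1\sin\theta_c\,(\sinh\tau_c+q\,\partial_\alpha\theta_c)\,d\alpha$, which is exactly the cancellation the paper obtains by differentiating the nonlinear identity; both routes hinge on analyticity plus the Crapper equation, so this part is essentially equivalent. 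What you genuinely add is the sufficiency argument the paper omits: the decomposition $\Gamma=q\,\partial_\alpha+K$ with $K=\cosh(\tau_c)\mathcal{H}$ compact from $H^{1}_{odd}$ to $L^{2}_{even}$ (legitimate since $\tau_c$ is smooth and bounded for the explicit Crapper solutions with $|A|<A_0$, and $H^1\hookrightarrow L^2$ is compact), giving Fredholmness of index $-1$ and closed range; combined with the injectivity from the preceding lemma this forces a one-dimensional cokernel, and the inclusion of the closed codimension-one range inside the closed codimension-one subspace $(\cos\theta_c)^\perp$ of $L^{2}_{even}$ yields equality. Your route thus buys a self-contained proof of the full equivalence, where the paper's in-text argument covers only the orthogonality constraint and outsources the existence half to the literature.
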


\begin{proof}
The complete proof can be found in \cite[Proposition 3.3]{AAW2013}. Here we will prove that the cokernel has dimension one and it is spanned by $\cos\theta_c$.\\
\noindent If we consider the operator $\mathcal{F}_1$ with  $(p,\omega_0,B)=(0,0,0)$, we have

\begin{align*}
&\int_{-\pi}^{\pi}\mathcal{F}_1\cos\theta\,d\alpha=\int_{-\pi}^{\pi}\left(\sin\mathcal{H}\theta + q\frac{d\theta}{d\alpha}\right)\cos\theta\,d\alpha=0,
\end{align*}

\noindent because the second term is the $q$ multiplied by $\sin\theta$ in the interval $(-\pi,\pi)$ and the the first term is $0$ because of the Cauchy integral theorem. In particular, if we take the derivative with respect to $\theta$ and we compute it in $\theta_c$ we get 

\begin{align*}
&\int_{-\pi}^{\pi}\Gamma\theta_1\cos\theta_c\,d\alpha +\int_{-\pi}^{\pi} \mathcal{F}_1\sin\theta_c\,d\alpha=\int_{-\pi}^{\pi}\Gamma\theta_1\cos\theta_c\,d\alpha+\int_{-\pi}^{\pi}\left(q\frac{d\theta_c}{d\alpha}+\sinh\mathcal{H}\theta_c\right)\sin\theta_c\,d\alpha=0,
\end{align*}

\noindent since the quantity in the brackets is $0$ for \eqref{Crapper-Bernoulli} thus it follows

\begin{equation}\label{Gamma-coseno}
\int_{-\pi}^{\pi}\Gamma\theta_1\cos\theta_c\,d\alpha=0.
\end{equation}
\end{proof}
\bigskip

\noindent For the operator $\mathcal{A}+\mathcal{I}$ we have the following result, proved in \cite{CCG2011}.

\begin{lemma}\label{DF2-invertible}
Let $z\in H^{3}$ be  a curve without self-intersections. Then

$$\mathcal{A}(z)\omega=2 BR(z,\omega)\cdot\partial_{\alpha}z$$
\medskip

\noindent defines a compact linear operator 

$$\mathcal{A}(z): H^{1}\rightarrow H^{1}$$
\medskip

\noindent whose eigenvalues are strictly smaller than $1$ in absolute value. In particular, the operator $\mathcal{A}+\mathcal{I}$ is invertible. 
\end{lemma}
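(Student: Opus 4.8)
The plan is to prove the two assertions---compactness and the spectral bound $|\lambda|<1$---separately; invertibility of $\mathcal{A}+\mathcal{I}$ then follows from the Fredholm alternative, since $\mathcal{A}(z)$ is compact and $-1$ will be excluded from its point spectrum. I would not expect to need anything beyond the jump relations for $BR$ recorded above together with standard elliptic/potential theory.

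For compactness I would write $\mathcal{A}(z)$ as the integral operator with kernel
\[
K(\alpha,\alpha')=\frac{1}{\pi}\,\frac{(z(\alpha)-z(\alpha'))^{\perp}\cdot\partial_{\alpha}z(\alpha)}{|z(\alpha)-z(\alpha')|^{2}},
\]
and observe that the principal-value singularity disappears after taking the tangential component. Taylor expanding near $\alpha'=\alpha$ and using $(\partial_{\alpha}z)^{\perp}\cdot\partial_{\alpha}z=0$, the leading $\tfrac{1}{\alpha-\alpha'}$ term cancels and $K$ extends continuously across the diagonal with value $-\tfrac{(\partial_{\alpha}^{2}z)^{\perp}\cdot\partial_{\alpha}z}{2\pi|\partial_{\alpha}z|^{2}}$, a multiple of the curvature. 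Since the curve lies in $H^{3}$ and has no self-intersections, $K$ is bounded, and differentiating once in $\alpha$ produces only a Cauchy/Hilbert-type singular integral, which is bounded on $L^{2}$. Hence $\mathcal{A}(z)\colon L^{2}\to H^{1}$ is bounded, and precomposing with the compact embedding $H^{1}\hookrightarrow L^{2}$ shows that $\mathcal{A}(z)\colon H^{1}\to H^{1}$ is compact.

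For the spectral bound I would use an energy argument. Given $\omega$, let $v=\nabla^{\perp}\psi$ be the divergence- and curl-free field generated by the vortex sheet of strength $\omega$ on $z$, harmonic in the interior region $\Omega^{+}$ and the exterior region $\Omega^{-}$; the jump relations give $v^{\pm}\cdot\partial_{\alpha}z=BR(z,\omega)\cdot\partial_{\alpha}z\pm\tfrac{1}{2}\omega$, so $\mathcal{A}(z)\omega=(v^{+}+v^{-})\cdot\partial_{\alpha}z$ while $\omega=(v^{+}-v^{-})\cdot\partial_{\alpha}z$. If $\mathcal{A}(z)\omega=\lambda\omega$, then $v^{+}\cdot\partial_{\alpha}z=\tfrac{\lambda+1}{2}\omega$ and $v^{-}\cdot\partial_{\alpha}z=\tfrac{\lambda-1}{2}\omega$. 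Writing $\Psi=\psi\circ z$ (continuous across the sheet) and applying Green's identity in each region, with the two outward normals inducing opposite boundary orientations, the Dirichlet energies satisfy
\[
E^{+}=\tfrac{\lambda+1}{2}J,\qquad E^{-}=\tfrac{1-\lambda}{2}J,\qquad J=\int_{-\pi}^{\pi}\bar{\Psi}\,\omega\,d\alpha=E^{+}+E^{-}\ge0.
\]
Since $E^{\pm}\ge0$ are real and $J>0$ for $\omega\not\equiv0$, dividing shows $\lambda$ is real and $-1<\lambda<1$; as $-1$ is therefore not an eigenvalue, $\mathcal{A}+\mathcal{I}$ is invertible.

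The step I expect to be the main obstacle is the strict inequality, i.e.\ excluding $\lambda=\pm1$, which amounts to showing $E^{+}>0$ and $E^{-}>0$ for a nonzero eigenfunction. If one energy vanished, the field would vanish on that side, forcing $\Psi$ constant, and then by uniqueness for the Neumann problem in the remaining region---here the no-self-intersection hypothesis guarantees that $\Omega^{+}$ and $\Omega^{-}$ are genuine domains---the field would vanish on both sides, contradicting $\omega\neq0$. Getting the boundary terms, the orientations, and especially the decay/periodicity condition at infinity right (so that the energy integrals converge and uniqueness applies in the unbounded exterior strip) is the delicate part; once $E^{\pm}>0$ is secured, the bound $|\lambda|<1$ and hence the invertibility of $\mathcal{A}+\mathcal{I}$ are immediate.
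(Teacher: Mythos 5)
The paper itself gives no proof of this lemma: it is quoted verbatim from the reference [CCG2011] (C\'ordoba--C\'ordoba--Gancedo), so there is no internal argument to compare against line by line. What you have written is, in essence, the classical potential-theoretic proof that underlies that cited result, and both halves are structurally sound: compactness follows because dotting the Birkhoff--Rott kernel with $\partial_\alpha z(\alpha)$ cancels the principal-value singularity (your diagonal limit, a curvature multiple, is correct for $z\in H^3\hookrightarrow C^{2,1/2}$), giving $\mathcal{A}(z):L^2\to H^1$ bounded and hence $\mathcal{A}(z):H^1\to H^1$ compact by Rellich; and the spectral bound follows from the Plemelj relations $v^\pm\cdot\partial_\alpha z=BR(z,\omega)\cdot\partial_\alpha z\pm\tfrac12\omega$ together with Green's identities in the two complementary regions, yielding $\lambda=\frac{E^+-E^-}{E^++E^-}$, which is real and lies in $(-1,1)$ once $E^\pm>0$. (One small slip: when one energy vanishes, the boundary data you inherit in the remaining region is \emph{constant Dirichlet} data for the stream function, not Neumann data; uniqueness for the Dirichlet problem with decay is what forces the field to vanish there.)

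The genuine issue is exactly the one you flagged and left open: in this periodic, vertically unbounded setting the vortex-sheet velocity does \emph{not} decay when $\int_{-\pi}^{\pi}\omega\,d\alpha\neq 0$ --- it tends to opposite horizontal constants as $y\to\pm\infty$ --- so the Dirichlet energies $E^\pm$ as you defined them are infinite and the identities $E^+=\tfrac{\lambda+1}{2}J$, $E^-=\tfrac{1-\lambda}{2}J$ are vacuous. This gap can be closed cheaply. Since $BR(z,\omega)=\tfrac12(v^++v^-)$ on the curve and $v$ is irrotational on each side, the circulation of $v^\pm$ along the (periodic) curve can be computed by deforming the contour to $y=\pm\infty$, where the limits are $\mp\bigl(\tfrac{1}{4\pi}\int\omega\bigr)(1,0)$; the two circulations cancel, so $\int_{-\pi}^{\pi}\mathcal{A}(z)\omega\,d\alpha=0$ for \emph{every} $\omega$. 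Consequently any eigenfunction with $\lambda\neq 0$ automatically has zero mean, the induced velocity then decays exponentially at both ends, the energies converge, and your Green's identities and the uniqueness step in the unbounded region are legitimate; the case $\lambda=0$ is irrelevant both for the bound $|\lambda|<1$ and for the invertibility of $\mathcal{A}+\mathcal{I}$. (Equivalently, one may run the identical argument on the adjoint, double-layer side, whose potential tends to constants with exponentially decaying gradient regardless of the mean.) With that observation inserted, your argument is a complete proof of the lemma and coincides in substance with the proof in the cited reference rather than with anything written in this paper.
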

\medskip

\noindent In conclusion, the equations

\begin{equation}
\begin{pmatrix}
\Gamma & 0 \\
D_{\theta_A}\mathcal{F}_2 & D_{\tilde{\omega}}\mathcal{F}_2 
\end{pmatrix}\cdot 
\begin{pmatrix}
\theta_1\\
\omega_1
\end{pmatrix}=
\begin{pmatrix}
f\\
g
\end{pmatrix}.
\end{equation} 
\medskip

\noindent computed at the point $(\theta_c,\tilde{\omega}_c;0,0,0)$  has a solution if and only if $|A|<A_0$ and $(f,\cos\theta_c)=0$.
\bigskip

\noindent To prove Theorem \ref{point-existence},  we cannot use directly the implicit function theorem \ref{IFT}  since the F\'echet derivative $D\mathcal{F}$ is not surjective. Following \cite{AAW2013} and also \cite{CEG2016}, we use an adaptation of the Lyapunov-Schmidt reduction argument. Define 

\begin{equation*}
\Pi\theta_1:=(\cos\theta_c,\theta_1)\frac{\cos\theta_c}{\|\cos\theta_c\|^2_{L^2}},
\end{equation*}
\medskip

\noindent where $\Pi$ is the $L^2$ projector onto the linear span of $\cos\theta_c$ and from \eqref{Gamma-coseno} we have $\Pi\Gamma=0$. Thus, we define the projector on $\Gamma(H^{2}_{odd})$, as $\mathcal {I}-\Pi$ and 

\begin{equation}\label{tilde-mathcal-F}
\tilde{\mathcal{F}}=((\mathcal {I}-\Pi)\mathcal{F}_1,\mathcal{F}_2):H^2_{odd}\times H^1_{even}\times\mathbb{R}^3\rightarrow \Gamma(H^2_{odd})\times L^2,
\end{equation}
\medskip

\noindent where $\mathcal{F}=(\mathcal{F}_1,\mathcal{F}_2)$ is defined in \eqref{point-water-waves}. The Fr\'echet derivatives of \eqref{tilde-mathcal-F} in $(\theta_A,\tilde{\omega}_A)$  at the Crapper point $(\theta_c,\tilde{\omega_c};0,0,0)$ is now invertible. So we can apply the implicit function theorem to $\tilde{\mathcal{F}}$ then there exists a smooth function $\Theta_c:U_{B,p,\omega_0}\rightarrow H^2_{odd}\times H^1_{even}$, where $U_{B,p,\omega_0}$ is a small neighborhod of $(0,0,0)$ such that $\Theta_c(0,0,0)=(\theta_c,\tilde{\omega}_c)$ and for all $(B,p,\omega_0)\in U_{B,p,\omega_0,}$

$$\tilde{\mathcal{F}}(\Theta_c(B,p,\omega_0);B,p,\omega_0)=0.$$
\medskip

\noindent But now, if we consider $\mathcal{F}(\Theta_c(B,p,\omega_0);B,p,\omega_0)$, defined in \eqref{point-water-waves}, then it could not be $0$. So we introduce a differentiable function on $U_{B,p,\omega_0}$:

$$f(B;p,\omega_0)=(\cos\theta_c,\mathcal{F}_1(\Theta_c(B,p,\omega_0);B,p,\omega_0)).$$
\medskip

\noindent We have that $\Pi\mathcal{F}_1=f(B;p,\omega_0)\frac{\cos\theta_c}{\|\cos\theta_c\|^2_{L^2}}$ and if we find a point $(B^*;p^*,\omega_0^*)$ such that $f(B^*;p^*,\omega_0^*)=0$, then 
$\mathcal{F}_1(\Theta_c(B^*,p^*,\omega_0^*);B^*,p^*,\omega_0^*))=0$ and so our problem is solved.\\

\noindent We note that choosing $(B^*;p^*,\omega_0^*)=(0,0,0)$, then  $f(0;0,0)=0$. Its derivative with respect to $B$ is 

\begin{equation*}
D_B f(0,0;0)=\left(\cos\theta_c,\Gamma\partial_k\Theta_c+e^{-\mathcal{H}\theta_c}\right)=\left(\cos\theta_c,e^{-\mathcal{H}\theta_c}\right)=-2\pi,
\end{equation*}
\medskip

\noindent where we have used \eqref{Gamma-coseno} and the Cauchy integral theorem. Hence, we can apply the implicit function theorem \ref{IFT} to the function $f$ and there exists a smooth function $B^*(p,\omega_0)$ that satisfies $f(B^*(p,\omega_0);p,\omega_0)=0$, for $(p,\omega_0)$ in $U_{p,\omega_0}$, a small neighborhod of $(0,0)$. \\

\noindent We can resume these results in the following theorem.

\begin{theorem}
Let $|A|<A_0$. There exist $(B,p,\omega_0)$  and a unique smooth function $B^*:U_{p,\omega_0}\rightarrow U_B$, such that  $B^*(0,0)=0$ and a unique smooth function 
$$\Theta_c: U_{B,p,\omega_0}\rightarrow H^2_{odd}\times H^1_{even},$$

such that $\Theta_c(0,0,0)=(\theta_c,\tilde{\omega}_c)$ and satisfy

$$\mathcal{F}(\Theta_c(B^*(p,\omega_0), p,\omega_0);B^*(p,\omega_0),p,\omega_0,)=0.$$
\end{theorem}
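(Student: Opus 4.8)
The plan is to obtain the solution as a small perturbation of the Crapper configuration by a Lyapunov--Schmidt reduction followed by two applications of the implicit function theorem \ref{IFT}. The starting point is that $\mathcal{F}(\theta_c,\tilde{\omega}_c;0,0,0)=0$, established in \eqref{F1}--\eqref{F2}, and that the Fr\'echet derivative in $(\theta_A,\tilde{\omega})$ at the Crapper point is the block-triangular operator \eqref{point-frechet-derivative}. By Lemma \ref{DF2-invertible} the lower-right block $\mathcal{A}+\mathcal{I}$ is invertible, so the invertibility of the whole matrix is governed entirely by the diagonal block $\Gamma$. The obstruction is that $\Gamma\colon H^{1}_{odd}\to L^2_{even}$ is injective but not surjective: by Lemma \ref{DF1-invertible} its range is exactly the $L^2$-orthogonal complement of $\cos\theta_c$, so $D\mathcal{F}$ fails to be onto by one dimension and the implicit function theorem cannot be applied directly.

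First I would remove this single obstruction. Let $\Pi$ be the $L^2$-projection onto $\operatorname{span}(\cos\theta_c)$; by \eqref{Gamma-coseno} we have $\Pi\Gamma=0$, so $\Gamma$ maps onto $(\mathcal{I}-\Pi)L^2_{even}=\Gamma(H^2_{odd})$. Replacing the first component of $\mathcal{F}$ by its projected version yields the reduced map $\tilde{\mathcal{F}}=((\mathcal{I}-\Pi)\mathcal{F}_1,\mathcal{F}_2)$ of \eqref{tilde-mathcal-F}, whose derivative at the Crapper point is now a bijection onto $\Gamma(H^2_{odd})\times L^2$. Here one must check the smoothness of $\mathcal{F}$ as a map of Banach spaces, which requires that $\tilde{\tau}(\theta_A)$ and $\tilde{\theta}(\theta_A)$ depend smoothly on $\theta_A$; this is supplied by the elliptic solve \eqref{point-tilde-tau}, while the interface $z(\alpha)$ of \eqref{point-z} and the Birkhoff--Rott integral depend smoothly on the data as long as the curve stays free of self-intersections. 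Applying the implicit function theorem \ref{IFT} to $\tilde{\mathcal{F}}$ produces the smooth map $\Theta_c$ on a neighbourhood $U_{B,p,\omega_0}$ of the origin with $\Theta_c(0,0,0)=(\theta_c,\tilde{\omega}_c)$ and $\tilde{\mathcal{F}}(\Theta_c;B,p,\omega_0)=0$.

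It then remains to kill the projected component, i.e.\ to make the full $\mathcal{F}_1$ vanish and not merely $(\mathcal{I}-\Pi)\mathcal{F}_1$. I would define the scalar bifurcation function $f(B;p,\omega_0)=(\cos\theta_c,\mathcal{F}_1(\Theta_c(B,p,\omega_0);B,p,\omega_0))$, so that $\Pi\mathcal{F}_1=f\,\cos\theta_c/\|\cos\theta_c\|_{L^2}^2$ and any zero of $f$ forces $\mathcal{F}=0$. Since $f(0;0,0)=0$, I would solve $f=0$ for $B$ as a function of $(p,\omega_0)$ by a second application of the implicit function theorem in the single real variable $B$. The hypothesis to verify is $D_B f(0;0,0)\neq 0$; differentiating and using $\Pi\Gamma=0$ from \eqref{Gamma-coseno} together with the Cauchy integral theorem, one finds $D_B f(0;0,0)=(\cos\theta_c,e^{-\mathcal{H}\theta_c})=-2\pi\neq 0$. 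This yields the smooth branch $B^*\colon U_{p,\omega_0}\to U_B$ with $B^*(0,0)=0$, and substituting $B=B^*(p,\omega_0)$ gives $\mathcal{F}(\Theta_c(B^*(p,\omega_0),p,\omega_0);B^*(p,\omega_0),p,\omega_0)=0$, which is the claim; uniqueness of $\Theta_c$ and of $B^*$ is inherited from the two implicit function theorems.

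The main obstacle, and the reason a naive application of the implicit function theorem fails, is precisely the one-dimensional cokernel of $\Gamma$ spanned by $\cos\theta_c$; the argument hinges on the two exact identities $\int_{-\pi}^{\pi}\Gamma\theta_1\cos\theta_c\,d\alpha=0$ and $(\cos\theta_c,e^{-\mathcal{H}\theta_c})=-2\pi$, the first making the Lyapunov--Schmidt splitting consistent and the second guaranteeing solvability of the reduced scalar equation. A secondary technical point is ensuring that the constructed perturbation keeps the interface \eqref{point-z} free of self-intersection, so that the Birkhoff--Rott operator and Lemma \ref{DF2-invertible} remain applicable along the whole branch; this is where the restriction $|A|<A_0$ enters.
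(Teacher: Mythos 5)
Your proposal is correct and follows essentially the same route as the paper: the same Lyapunov--Schmidt reduction via the projector $\Pi$ onto $\operatorname{span}(\cos\theta_c)$, the same reduced map $\tilde{\mathcal{F}}=((\mathcal{I}-\Pi)\mathcal{F}_1,\mathcal{F}_2)$ solved by the implicit function theorem to produce $\Theta_c$, and the same scalar bifurcation function $f(B;p,\omega_0)$ with $D_Bf(0;0,0)=(\cos\theta_c,e^{-\mathcal{H}\theta_c})=-2\pi$ yielding $B^*$ by a second application of the implicit function theorem. Your additional remarks on the smoothness of $\tilde{\tau}(\theta_A)$, $\tilde{\theta}(\theta_A)$ and on keeping the interface free of self-intersections make explicit some points the paper leaves implicit, but the argument is the same.
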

\medskip

\noindent The main Theorem \ref{point-existence} is a direct consequence of the theorem above. 

\bigskip

\section{The vortex patch case}\label{patch}
\subsection{Framework}
\noindent We consider a patch of vorticity $\omega(x,y)=\omega_0\chi_{D}(x,y)$, where $\omega_0\in\mathbb{R}$ and $\chi_{D}$ is the indicator function of the vortex domain $D$ near the origin, symmetric with respect to the $y$-axis, satisfying

\begin{equation}\label{small-distance-cond}
\max\{\textrm{dist}((x,y),(0,0))\}<<1, \quad\quad  \forall (x,y)\in \partial D,
\end{equation}
 
\noindent equivalent to consider a small vortex patch. In this case, as for the previous one, the fluid is incompressible then we introduce a stream function, which is the sum of an harmonic part $\psi_H$, defined in \eqref{harmonic-point-stream} and a part related to the vortex patch

\begin{equation}\label{patch-stream-function}
\begin{split}
\psi_{VP}(x,y)&=\frac{\omega_0}{2\pi}\int_{D}\log|(x,y)-(x',y')|\,dx'\,dy'.
\end{split}
\end{equation}
\medskip

\noindent We introduce the parametrization of the boundary $\partial D=\{\gamma(\alpha),\alpha\in [-\pi,\pi]\}$, that for now is a generic parametrization that satisfies the condition \eqref{small-distance-cond}. The representation of $D$ in the fig. \ref{new-annulus} is just an example, since it depends on the choice of the parametrization. Additionally, we obtain the velocity by taking the orthogonal gradient of the stream function, then the velocity associated to  \eqref{patch-stream-function} is

\begin{equation}\label{patch-velocity}
\begin{split}
v_{VP}(x,y)&=\frac{\omega_0}{2\pi}\int_{\partial D}\log|(x,y)-(\gamma_1(\alpha'),\gamma_2(\alpha'))|\partial_{\alpha}\gamma(\alpha')\,d\alpha'.
\end{split}
\end{equation}
\medskip

\noindent The velocity is the sum of \eqref{patch-velocity} and the orthogonal gradient of the harmonic stream function \eqref{harmonic-point-stream}

\begin{equation}\label{velocity-patch}
v(x,y)=(-\partial_y\psi_{H}(x,y), \partial_x\psi_H(x,y))+v_{VP}(x,y).
\end{equation}
\medskip

\noindent  As for the case of the point vortex we have to adapt the problem \eqref{v-euler}. One of the conditions is the kinematic boundary condition, so we need the velocity at the interface,

\begin{equation}
\begin{split}
v(z(\alpha))=&BR(z(\alpha),\tilde{\omega}(\alpha))+\frac{1}{2}\frac{\tilde{\omega}(\alpha)}{|\partial_{\alpha}z|^2}\partial_{\alpha}z+\frac{\omega_0}{2\pi}\int_{-\pi}^{\pi}\log|z(\alpha)-\gamma(\alpha')|\,\partial_{\alpha}\gamma(\alpha')\,d\alpha',
\end{split}
\end{equation}
\medskip

\noindent where $z(\alpha)$ is the parametrization of the interface $\partial\Omega$ and we can write the kinematic boundary condition as follows

 \begin{equation}\label{patch-kbc}
\begin{split}
v(z(\alpha))\cdot(\partial_{\alpha}z(\alpha))^{\perp}&=BR(z(\alpha),\tilde{\omega}(\alpha))\cdot\partial_{\alpha}z(\alpha)^{\perp}\\[3mm]
&+\frac{\omega_0}{2\pi}\int_{-\pi}^{\pi}\log|z(\alpha)-\gamma(\alpha')|\,\partial_{\alpha}\gamma(\alpha')\,d\alpha'\cdot (\partial_{\alpha}z(\alpha))^{\perp}=0.
\end{split}
\end{equation}
\medskip

\noindent In the analysis of this case we observe that the patch satisfies an elliptic equation and it has a moving boundary. We impose the patch to be fixed by the following condition

$$v(\gamma(\alpha))\cdot (\partial_{\alpha}\gamma(\alpha))^{\perp}=0.$$ 

\noindent This is equivalent to require

\begin{equation}\label{fix-patch}
\begin{split}
v(\gamma(\alpha))\cdot (\partial_{\alpha}\gamma(\alpha))^{\perp}&=\frac{1}{2\pi}\int_{-\pi}^{\pi} \frac{(\gamma(\alpha)-z(\alpha'))^{\perp}}{|\gamma(\alpha)-z(\alpha')|^2}\cdot\tilde{\omega}(\alpha')\,d\alpha'\cdot \partial_{\alpha}\gamma(\alpha)^{\perp}\\[3mm]
&+\frac{\omega_0}{2\pi} P.V. \int_{\partial D}\log|\gamma(\alpha)-\gamma(\alpha')|\partial_{\alpha}\gamma(\alpha')\,d\alpha'\cdot \partial_{\alpha}\gamma(\alpha)^{\perp}=0
\end{split} 
\end{equation}
 \medskip

\noindent Furthermore, we need another condition for identify completely our problem. This condition is related to the Bernoulli equation \eqref{v-euler5}, equivalent to \eqref{psi-eul3}. The most important issue is to fix the interface $\partial\Omega$, but for the vortex patch case we will slightly change the idea presented in subsection \ref{general-vorticity} and used for the point vortex case. To pass from the domain $\Omega(x,y)$ into $\tilde{\Omega}(\phi,\psi)$ we will consider an approximated stream function $\tilde{\psi}$ such that

\begin{equation}\label{approx-stream}
\left\{\begin{array}{lll}
\displaystyle\frac{\partial\tilde{\psi}}{\partial x}=W(x,y)\frac{\partial\psi}{\partial x}\\[2mm]
\displaystyle\frac{\partial\tilde{\psi}}{\partial y}=W(x,y)\frac{\partial\psi}{\partial y},
\end{array}\right.
\end{equation}
\medskip

\noindent hence, in this way $(\phi,\tilde{\psi})$ are in relation through the Cauchy-Riemann  equations

\begin{equation}\label{potential-approx-stream}
\left\{\begin{array}{lll}
\displaystyle\frac{\partial\phi}{\partial x}=\frac{\partial\tilde{\psi}}{\partial y}=W v_1\\[2mm]
\displaystyle\frac{\partial\phi}{\partial y}=-\frac{\partial\tilde{\psi}}{\partial x}=W v_2,
\end{array}\right.
\end{equation}
\medskip

\noindent where $W(x,y)$ has to satisfy \eqref{x-y-W} and it is equal to $1$ in the case of irrotational fluid and $\Delta\tilde{\psi}=0$. Moreover, we point out that the new domain $\tilde{\Omega}(\phi,\tilde{\psi})$ is also the lower half plane, see fig. \ref{new-annulus}, because the approximated stream function $\tilde{\psi}(z(\alpha))=0$, due to the kinematic boundary condition \eqref{v-euler4} and the positivity of $W$.\\

\noindent In view of the fact that we will use the new coordinate system $(\phi,\tilde{\psi})$, we have to rewrite \eqref{x-y-Jacobian} and \eqref{phi-psi-DBernoulli}. Let us start by writing the relation between $(x,y)$ and $(\phi,\tilde{\psi})$, so the system \eqref{x-y-Jacobian} becomes

\begin{equation}\label{x-y-phi-tilde_psi}
\begin{pmatrix}
\displaystyle\frac{\partial x}{\partial\phi} & \displaystyle\frac{\partial x}{\partial\tilde{\psi}}\\
\displaystyle\frac{\partial y}{\partial\phi} & \displaystyle\frac{\partial y}{\partial\tilde{\psi}}\\
\end{pmatrix}
=\frac{1}{W(v_1^2+v_2^2)} 
\begin{pmatrix}
v_1 & - v_2\\
v_2 &  v_1
\end{pmatrix},
\end{equation} 
\medskip

\noindent Now, we have to rewrite the Bernoulli equation \eqref{psi-eul3} in the new coordinates. We will derive \eqref{psi-eul3} with respect to $\phi$, such that the constant on the RHS will disappear. Thus we have

\begin{equation}\label{DBernoulli-patch2}
\frac{1}{2}\frac{\partial}{\partial\phi}\left(v_1^2+v_2^2\right)+p\frac{v_2}{W(v_1^2+v_2^2)}-q\frac{\partial}{\partial\phi}\left(\frac{W}{\sqrt{v_1^2+v_2^2}}\left(v_1\frac{\partial v_2}{\partial\phi}-v_2\frac{\partial v_1}{\partial\phi}\right)\right)=0.
\end{equation}
\medskip

\noindent Finally, it is natural to bring the equations into a disk, because of the periodicity of the problem. As we did for the point and we define

 \begin{equation}\label{patch-annulus}
\left\{\begin{array}{lll}
\phi=-\alpha\\[2mm]
\tilde{\psi}=\log\rho,
\end{array}\right.
\end{equation}
\medskip

\noindent And we pass from $\tilde{\Omega}(\phi,\tilde{\psi})$ into the unit disk, see fig. \ref{new-annulus}. We observe that the patch has been chosen symmetric with respect to the vertical axis, so in the coordinates $(\phi,\tilde{\psi})$ it remains symmetric with respect to the $\tilde{\psi}-$axis (due to the symmetry of the functions \eqref{symmetry}). And in the coordinates $(\alpha,\rho)$, by using \eqref{patch-annulus}, it will be symmetric with respect to the horizontal axis and contained in a circular sector, where $\pm\alpha_1$ are defined through $\pm\phi_1$, in this way

$$\textrm{dist}((\pm\phi_1,\tilde{\psi}_1),(0,\tilde{\psi}))>\textrm{dist}((\phi,\tilde{\psi}),(0,\tilde{\psi})), \quad \forall (\phi,\tilde{\psi})\in \partial{\tilde{D}}.$$
\medskip

 \begin{figure}[htbp]
\centering
\includegraphics[scale=0.5]{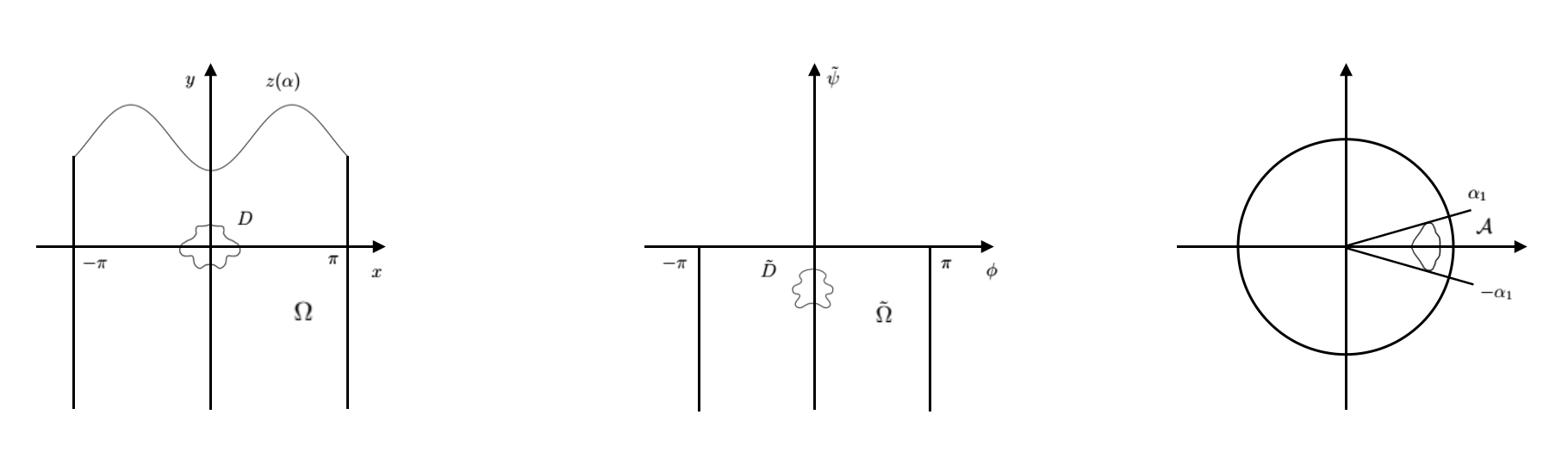}
\caption{The transformation of the patch $D(x,y)$, $\tilde{D}(\phi,\tilde{\psi})$ and $\mathcal{A}$.}\label{new-annulus}
\end{figure}
\medskip

\noindent In addition, by using the independent variables $(\tau,\theta)$, defined in  \eqref{tau-theta}, we write the equation $\Delta\tilde{\psi}=0$ in the new coordinates $(\phi,\tilde{\psi})$, by using the relations \eqref{approx-stream} and \eqref{potential-approx-stream} and we get an equation for $W(\phi,\tilde{\psi})$

$$\frac{\partial W}{\partial\tilde{\psi}}(v_1^2+v_2^2)-\omega_0\chi_{\tilde{D}}=0\quad\Rightarrow\quad W(\phi,\tilde{\psi})-W\left(\phi,-\infty\right) = \omega_0\int_{-\infty}^{\tilde{\psi}} e^{-2\tau(\phi,\tilde{\psi'})}\chi_{\tilde{D}}(\phi,\tilde{\psi})\,d\tilde{\psi'}. $$
\medskip

\noindent Since the value of $W$  at infinite is $1$, then in the variables $(\phi,\tilde{\psi})$, we have

\begin{equation}\label{W-phi-tilde-psi}
 W(\phi,\tilde{\psi})=1+\omega_0\int_{-\infty}^{\tilde{\psi}} e^{-2\tau(\phi,\tilde{\psi'})}\chi_{\tilde{D}}(\phi,\tilde{\psi'})\,d\tilde{\psi'}.
\end{equation}
\medskip

\noindent By using the change of variables \eqref{patch-annulus}, we have
 
\begin{equation}\label{patch-W-alpha-rho}
 W(\alpha,\rho)=1+\omega_0\int_{0}^{\rho}  \frac{e^{-2\tau(\alpha,\rho')}}{\rho'}\chi_{\mathcal{A}}(\alpha,\rho')\,d\rho'
 \end{equation}
\medskip

\noindent Concerning the derivative of the Bernoulli equation \eqref{DBernoulli-patch2}, we get

\begin{equation}\label{DBernoulli-patch3}
\frac{\partial}{\partial\alpha}\left( \frac{1}{2} e^{2\tau}\right)-p\frac{e^{-\tau}\sin\theta}{W}+q\frac{\partial}{\partial\alpha}\left(W e^{\tau}\frac{\partial\theta}{\partial\alpha}\right)=0.
\end{equation}
\medskip

\noindent By integrating with respect to $\alpha$ and taking the constant that appears on the RHS as $\frac{1}{2}+B$, as explained for the point vortex case, we get

\begin{equation*}
\frac{1}{2} e^{2\tau}-p\left(\int_{-\pi}^{\alpha}\frac{e^{-\tau}\sin\theta}{W(\alpha,1)}\,d\alpha'-1\right)+q W(\alpha,1) e^{\tau}\frac{\partial\theta}{\partial\alpha}=\frac{1}{2}+B,
\end{equation*}
\medskip

\noindent We multiply by $e^{-\tau}$ and we obtain the equation

\begin{equation}\label{patch-Bernoulli-tau-theta}
\sinh(\tau(\alpha,1))-p e^{-\tau(\alpha,1)}\left(\int_{-\pi}^{\alpha}\frac{e^{-\tau(\alpha',1)}\sin\theta(\alpha',1)}{W}\,d\alpha'-1\right)+q W \frac{\partial\theta(\alpha,1)}{\partial\alpha}-Be^{-\tau(\alpha,1)}=0.
\end{equation}
\medskip

\noindent We can solve our problem by finding a $2\pi$ periodic functions $\tau(\alpha)$ even and $\theta(\alpha)$ odd, an even function $\tilde{\omega}(\alpha)$ and a curve $\gamma(\alpha)$, which is the parametrization of the vortex patch that satisfy  \eqref{patch-kbc}, \eqref{fix-patch} and \eqref{patch-Bernoulli-tau-theta}. As we did for the point vortex, the kinematic boundary condition \eqref{patch-kbc}, can be replaced by

\begin{equation}\label{patch-v-tangential}
W(\alpha,1) v(z(\alpha))\cdot\partial_{\alpha}z(\alpha)=-1,
\end{equation}
\medskip

\noindent then our problem reduced to analyze the equation \eqref{fix-patch}, \eqref{patch-Bernoulli-tau-theta} and \eqref{patch-v-tangential}.
\bigskip

\subsection{Perturbation of the Crapper formulation with a vortex patch}
In this section, we want to write our variables as a perturbation of Crapper variables. First of all, we get a relation between $(\tau,\theta)$ in both $(\phi,\tilde{\psi})$ and $(\alpha,\rho)$ variables, by using the rotational and the divergence free conditions, 

\begin{equation}\label{tau-theta-patch}
\begin{cases}
\displaystyle\frac{\partial\theta}{\partial\phi}=\frac{\omega e^{-2\tau}}{W}+\frac{\partial\tau}{\partial\tilde{\psi}}\\[4mm]
\displaystyle\frac{\partial\theta}{\partial\tilde{\psi}}=-\frac{\partial\tau}{\partial\phi}
\end{cases}\Longrightarrow
\begin{cases}
\displaystyle -\frac{\partial\theta}{\partial\alpha}=\frac{\omega e^{-2\tau}}{W}+\rho\frac{\partial\tau}{\partial\rho}\\[4mm]
\displaystyle\rho\frac{\partial\theta}{\partial\rho}=\frac{\partial\tau}{\partial\alpha}.
\end{cases}
\end{equation}
\medskip

\noindent Once we find the values of $(\tau, \theta)$, we can use the relations \eqref{x-y-phi-tilde_psi} and \eqref{patch-annulus} to obtain the parametrization of the interface

\begin{equation}\label{patch-interface-derivative}
\left\{\begin{array}{lll}
\displaystyle\frac{\partial z_1}{\partial\alpha}=-\frac{e^{-\tau(\alpha,1)}\cos(\theta(\alpha,1))}{W(\alpha,1)}\\[4mm]
\displaystyle\frac{\partial z_2}{\partial\alpha}=-\frac{e^{-\tau(\alpha,1)}\sin(\theta(\alpha,1))}{W(\alpha,1)},
\end{array}\right.
\end{equation}
\medskip

\noindent where $W(\alpha,1)$ is defined in \eqref{patch-W-alpha-rho}.\\

\noindent In the case of rotational waves $(\tau,\theta)$ do not satisfy the Cauchy-Riemann equations. For this reason we define $\tau=\tau_A+\omega_0\tilde{\tau}$ and $\theta=\theta_A+\omega_0\tilde{\theta}$, such that $(\tau_A,\theta_A)$ is the Crapper solution with small gravity but without vorticity thus it is incompressible and irrotational and satisfies the Cauchy-Riemann equations in the variables $(\phi,\psi)$, as explained in \eqref{tauCR-thetaCR}
 
\begin{equation*}
\left\{\begin{array}{lll}
\displaystyle\frac{\partial\theta_A}{\partial\phi}=\frac{\partial\tau_A}{\partial{\psi}}\\[4mm]
\displaystyle\frac{\partial\theta_A}{\partial{\psi}}=-\frac{\partial\tau_A}{\partial\phi}.
\end{array}\right.
\end{equation*}
\medskip

\noindent This implies that on the interface $\mathcal{S}$, i.e. $\psi=0$, one variable can be written as the Hilbert transform of the other $\tau_A=\mathcal{H}\theta_A$. Hence, in the $(\phi,\tilde{\psi})$ variables, we have

\begin{equation}\label{Crapper-C-R-patch}
\begin{cases}
\displaystyle \frac{\partial\theta_A}{\partial\phi}=W\frac{\partial\mathcal{H}\theta_A}{\partial\tilde{\psi}}\\[4mm]
\displaystyle\frac{\partial\theta_A}{\partial\tilde{\psi}}=-\frac{1}{W
}\frac{\partial\mathcal{H}\theta_A}{\partial\phi}
\end{cases}\Longrightarrow
\begin{cases}
\displaystyle -\frac{\partial\theta_A}{\partial\alpha}=W\rho\frac{\partial\mathcal{H}\theta_A}{\partial\rho}\\[4mm]
\displaystyle\rho\frac{\partial\theta_A}{\partial\rho}=\frac{1}{W}\frac{\partial\mathcal{H}\theta_A}{\partial\alpha}.
\end{cases}
\end{equation}
\medskip

\noindent By substituting \eqref{Crapper-C-R-patch} in \eqref{tau-theta-patch}, we have 

\begin{equation}\label{patch-tilde-tau-theta-quasi-Cauchy-Riemann-1}
\left\{\begin{array}{lll}
\displaystyle\omega_0\frac{\partial\tilde{\theta}}{\partial\phi}=\left(\frac{1}{W}-1\right)\frac{\partial\theta_A}{\partial\phi}+\frac{\omega_0\chi_{\tilde{D}}e^{-2\tau}}{W}+\omega_0\frac{\partial\tilde{\tau}}{\partial\tilde{\psi}}\\[5mm]
\displaystyle \omega_0\frac{\partial\tilde{\theta}}{\partial\tilde{\psi}}=-\omega_0\frac{\partial\tilde{\tau}}{\partial\phi}+(W-1)\frac{\partial\theta_A}{\partial\tilde{\psi}}
\end{array}\right.
\end{equation}
\medskip

\begin{center}
$\Downarrow$
\end{center}
\medskip

\begin{equation}\label{patch-tilde-tau-theta-quasi-Cauchy-Riemann-2}
\left\{\begin{array}{lll}
\displaystyle-\omega_0\frac{\partial\tilde{\theta}}{\partial\alpha}=\left(1-\frac{1}{W}\right)\frac{\partial\theta_A}{\partial\alpha}+\frac{\omega_0\chi_{\tilde{D}}e^{-2\tau}}{W}+\rho\omega_0\frac{\partial\tilde{\tau}}{\partial\rho}\\[5mm]
\displaystyle \omega_0\frac{\partial\tilde{\theta}}{\partial\tilde{\rho}}=\frac{1}{\rho}\omega_0\frac{\partial\tilde{\tau}}{\partial\alpha}+\left(1+W\right)\frac{\partial\theta_A}{\partial\tilde{\rho}}
\end{array}\right.
\end{equation}
\medskip

\noindent By deriving with respect to the opposite variable and taking the difference, we have the following elliptic equation in $(\alpha,\rho)$

\begin{equation}\label{elliptic-alpha-rho-patch}
\begin{split}
\omega_0\frac{\partial^2\tilde{\tau}}{\partial\rho^2}+\frac{1}{\rho^2}\omega_0\frac{\partial^2\tilde{\tau}}{\partial\alpha^2}+\frac{1}{\rho}\omega_0\frac{\partial\tilde{\tau}}{\partial\rho}=&-\frac{1}{\rho}\frac{\partial}{\partial\rho}\left(\frac{\omega_0\chi_{\mathcal{A}}(\alpha,\rho)e^{-2\tau}}{W}\right)+\frac{1}{\rho}\frac{\partial}{\partial\rho}\left(\frac{1}{W}\right)\frac{\partial\theta_A}{\partial\alpha}\\[3mm]
&-\frac{1}{\rho}\frac{\partial W}{\partial\alpha}\frac{\partial\theta_A}{\partial\rho}+\frac{1}{\rho}\frac{\partial^2\theta_A}{\partial\alpha\partial\rho}\left(\frac{1-W^2}{W}\right).
\end{split}
\end{equation}
\medskip

\noindent But we are interested in the elliptic equation in $(\phi,\tilde{\psi})-$coordinates, since it will be easy to study and we have

\begin{equation}\label{elliptic-phi-tilde_psi-patch}
\begin{split}
\omega_0\Delta\tilde{\tau}=-\frac{\partial}{\partial\tilde{\psi}}\left(\frac{\omega_0\chi_{\tilde{D}}(\phi,\tilde{\psi}) e ^{-2\tau}}{W}\right)+\frac{1}{W^2}\frac{\partial W}{\partial\tilde{\psi}}\frac{\partial\theta_A}{\partial\phi}+\frac{\partial W}{\partial\phi}\frac{\partial\theta_A}{\partial\tilde{\psi}}+\left(\frac{W^2-1}{W}\right)\frac{\partial^2\theta_A}{\partial\phi\partial\tilde{\psi}}.
\end{split}
\end{equation}
\bigskip

\noindent We want to find a solution $\tilde{\tau}$ of the elliptic problem \eqref{elliptic-phi-tilde_psi-patch}. First of all let us rewrite the equation with all the explicit terms.

\begin{equation*}
\begin{split}
\Delta\tilde{\tau}(\phi,\tilde{\psi})=&-\frac{\partial}{\partial\tilde{\psi}}\left(\frac{\chi_{\tilde{D}}(\phi,\tilde{\psi}) e ^{(-2\mathcal{H}\theta_A-2\omega_0\tilde{\tau})(\phi,\tilde{\psi})}}{1+\omega_0\int_{-\infty}^{\tilde{\psi}}\chi_{\tilde{D}}(\phi,\tilde{\psi}')e^{(-2\mathcal{H}\theta_A-2\omega_0\tilde{\tau})(\phi,\tilde{\psi}')}\,d\tilde{\psi}'}\right)\\[4mm]
&+\frac{\chi_{\tilde{D}}(\phi,\tilde{\psi})e^{(-2\mathcal{H}\theta_A-2\omega_0\tilde{\tau})(\phi,\tilde{\psi})}}{\left(1+\omega_0\int_{-\infty}^{\tilde{\psi}}\chi_{\tilde{D}}(\phi,\tilde{\psi}')e^{(-2\mathcal{H}\theta_A-2\omega_0\tilde{\tau})(\phi,\tilde{\psi}')}\,d\tilde{\psi}'\right)^2}\frac{\partial\theta_A}{\partial\phi}\\[4mm]
&+\frac{\partial}{\partial\phi}\left(\int_{-\infty}^{\tilde{\psi}}\chi_{\tilde{D}}(\phi,\tilde{\psi}')e^{(-2\mathcal{H}\theta_A-2\omega_0\tilde{\tau})(\phi,\tilde{\psi}')}\,d\tilde{\psi}'\right)\frac{\partial\theta_A}{\partial\tilde{\psi}}\\[4mm]
&+\frac{2\int_{-\infty}^{\tilde{\psi}}\chi_{\tilde{D}}(\phi,\tilde{\psi}')e^{(-2\mathcal{H}\theta_A-2\omega_0\tilde{\tau})(\phi,\tilde{\psi}')}\,d\tilde{\psi}'}{1+\omega_0\int_{-\infty}^{\tilde{\psi}}\chi_{\tilde{D}}(\phi,\tilde{\psi}')e^{(-2\mathcal{H}\theta_A-2\omega_0\tilde{\tau})(\phi,\tilde{\psi}')}\,d\tilde{\psi}'}\frac{\partial^2\theta_A}{\partial\phi\partial\tilde{\psi}}\\[4mm]
&+\frac{\omega_0\left(\int_{-\infty}^{\tilde{\psi}}\chi_{\tilde{D}}(\phi,\tilde{\psi}')e^{(-2\mathcal{H}\theta_A-2\omega_0\tilde{\tau})(\phi,\tilde{\psi}')}\,d\tilde{\psi}'\right)^2}{1+\omega_0\int_{-\infty}^{\tilde{\psi}}\chi_{\tilde{D}}(\phi,\tilde{\psi}')e^{(-2\mathcal{H}\theta_A-2\omega_0\tilde{\tau})(\phi,\tilde{\psi}')}\,d\tilde{\psi}'}\frac{\partial^2\theta_A}{\partial\phi\partial\tilde{\psi}}=f((\phi,\tilde{\psi}),\tilde{\tau}).
\end{split}
\end{equation*}
\bigskip

\noindent where we use that $\displaystyle\frac{\partial W}{\partial\tilde{\psi}}=\omega_0\chi_{\tilde{D}}(\phi,\tilde{\psi})e^{(-2\mathcal{H}\theta_A-2\omega_0\tilde{\tau})(\phi,\tilde{\psi})}.$ Now, we define a solution in the following way

\begin{equation}\label{tau-tilde-solution}
\tilde{\tau}(\phi,\tilde{\psi})=f((\phi,\tilde{\psi}),\tilde{\tau})*G_2(\phi,\tilde{\psi}),
\end{equation}

\medskip

\noindent where $G_2(\phi,\tilde{\psi})$ is the Green function in the domain $\tilde{\Omega}$. We will show that \eqref{tau-tilde-solution} solves the elliptic equation, thanks to the smallness of the parameters involved.\\

\noindent If we use the properties of commutativity and differentiation of the convolution; the integration by parts with the fact that  $W(\pm\pi,\tilde{\psi}')=1$, then we are able to eliminate the derivative of $\tilde{\tau}$ and we have

\begin{equation}\label{patch-tilde-tau-omega0}
\begin{split}
\tilde{\tau}(\phi,\tilde{\psi})=&-\frac{\chi(\phi,\tilde{\psi}) e ^{(-2\mathcal{H}\theta_A-2\omega_0\tilde{\tau})(\phi,\tilde{\psi})}}{1+\omega_0\int_{-\infty}^{\tilde{\psi}}\chi_{\tilde{D}}(\phi,\tilde{\psi}')e^{(-2\mathcal{H}\theta_A-2\omega_0\tilde{\tau})(\phi,\tilde{\psi}')}\,d\tilde{\psi}'}*\frac{\partial}{\partial\tilde{\psi}} G_2(\phi,\tilde{\psi})\\[4mm]
&+\frac{\chi(\phi,\tilde{\psi}) e ^{(-2\mathcal{H}\theta_A-2\omega_0\tilde{\tau})(\phi,\tilde{\psi})}}{\left(1+\omega_0\int_{-\infty}^{\tilde{\psi}}\chi_{\tilde{D}}(\phi,\tilde{\psi}')e^{(-2\mathcal{H}\theta_A-2\omega_0\tilde{\tau})(\phi,\tilde{\psi}')}\,d\tilde{\psi}'\right)^2}\cdot\frac{\partial\theta_A}{\partial\phi}*G_2\\[4mm]
&-\frac{\partial G_2}{\partial\phi}*\frac{\partial\theta_A}{\partial\tilde{\psi}}\cdot\int_{-\infty}^{\tilde{\psi}}\chi_{\tilde{D}}(\phi,\tilde{\psi'})e^{(-2\mathcal{H}\theta_A-2\omega_0\tilde{\tau})(\phi,\tilde{\psi'})}\,d\tilde{\psi}'\\[4mm]
&-G_2*\frac{\partial^2\theta_A}{\partial\phi\partial\tilde{\psi}}\cdot\int_{-\infty}^{\tilde{\psi}}\chi_{\tilde{D}}(\phi,\tilde{\psi'})e^{(-2\mathcal{H}\theta_A-2\omega_0\tilde{\tau})(\phi,\tilde{\psi'})}\,d\tilde{\psi}'\\[4mm]
&+\frac{2\int_{-\infty}^{\tilde{\psi}}\chi_{\tilde{D}}(\phi,\tilde{\psi}')e^{(-2\mathcal{H}\theta_A-2\omega_0\tilde{\tau})(\phi,\tilde{\psi}')}\,d\tilde{\psi}'}{1+\omega_0\int_{-\infty}^{\tilde{\psi}}\chi_{\tilde{D}}(\phi,\tilde{\psi}')e^{(-2\mathcal{H}\theta_A-2\omega_0\tilde{\tau})(\phi,\tilde{\psi}')}\,d\tilde{\psi}'}\cdot \frac{\partial^2\theta_A}{\partial\phi\partial\tilde{\psi}}*G_2\\[4mm]
&+\frac{\omega_0\left(\int_{-\infty}^{\tilde{\psi}}\chi_{\tilde{D}}(\phi,\tilde{\psi}')e^{(-2\mathcal{H}\theta_A-2\omega_0\tilde{\tau})(\phi,\tilde{\psi}')}\,d\tilde{\psi}'\right)^2}{1+\omega_0\int_{-\infty}^{\tilde{\psi}}\chi_{\tilde{D}}(\phi,\tilde{\psi}')e^{(-2\mathcal{H}\theta_A-2\omega_0\tilde{\tau})(\phi,\tilde{\psi}')}\,d\tilde{\psi}'}\cdot \frac{\partial^2\theta_A}{\partial\phi\partial\tilde{\psi}}*G_2
\end{split}
\end{equation}
\medskip

\noindent Since we are looking for a solution with small $\omega_0$, we rewrite \eqref{patch-tilde-tau-omega0} around $\omega_0=0$, we write just the first order

\begin{equation}\label{approxim-tilde-tau}
\begin{split}
\tilde{\tau}&=-\chi(\phi,\tilde{\psi}) e ^{-2\mathcal{H}\theta_A(\phi,\tilde{\psi})}*\frac{\partial G_2}{\partial\tilde{\psi}}+\chi(\phi,\tilde{\psi}) e ^{-2\mathcal{H}\theta_A(\phi,\tilde{\psi})}\cdot\frac{\partial\theta_A}{\partial\phi}*G_2\\[4mm]
&-\frac{\partial G_2}{\partial\phi}*\frac{\partial\theta_A}{\partial\tilde{\psi}}\int_{-\infty}^{\tilde{\psi}}\chi(\phi,\tilde{\psi}') e ^{-2\mathcal{H}\theta_A(\phi,\tilde{\psi}')}\,d\tilde{\psi}'
\end{split}
\end{equation}

\begin{equation*}
\begin{split}
&-G_2*\frac{\partial^2\theta_A}{\partial\phi\partial\tilde{\psi}}\int_{-\infty}^{\tilde{\psi}}\chi(\phi,\tilde{\psi}') e ^{-2\mathcal{H}\theta_A(\phi,\tilde{\psi}')}\,d\tilde{\psi}'+2\int_{-\infty}^{\tilde{\psi}}\chi_{\tilde{D}}(\phi,\tilde{\psi}')e^{-2\mathcal{H}\theta_A(\phi,\tilde{\psi}')}\,d\tilde{\psi}'\cdot \frac{\partial^2\theta_A}{\partial\phi\partial\tilde{\psi}}*G_2\\[4mm]
&+\omega_02\chi_{\tilde{D}}(\phi,\tilde{\psi})e^{-2\mathcal{H}\theta_A(\phi,\tilde{\psi})}\tilde{\tau}(\phi,\tilde{\psi})*\frac{\partial G_2}{\partial\tilde{\psi}}\\[4mm]
&+\omega_0\chi_{\tilde{D}}(\phi,\tilde{\psi})e^{-2\mathcal{H}\theta_A(\phi,\tilde{\psi})}\int_{-\infty}^{\tilde{\psi}}\chi_{\tilde{D}}(\phi,\tilde{\psi}')e^{-2\mathcal{H}\theta_A(\phi,\tilde{\psi}')}\,d\tilde{\psi}'*\frac{\partial G_2}{\partial\tilde{\psi}}\\[4mm]
&-2\omega_0 \chi_{\tilde{D}}(\phi,\tilde{\psi})e^{-2\mathcal{H}\theta_A(\phi,\tilde{\psi})}\tilde{\tau}(\phi,\tilde{\psi})\cdot \frac{\partial\theta_A}{\partial\phi}*G_2\\[4mm]
&-2\omega_0\chi_{\tilde{D}}(\phi,\tilde{\psi})e^{-2\mathcal{H}\theta_A(\phi,\tilde{\psi})}\int_{-\infty}^{\tilde{\psi}}\chi_{\tilde{D}}(\phi,\tilde{\psi}')e^{-2\mathcal{H}\theta_A(\phi,\tilde{\psi}')}\,d\tilde{\psi}'\cdot \frac{\partial\theta_A}{\partial\phi}*G_2\\[4mm]
&+\omega_0\frac{\partial G_2}{\partial\phi}*\frac{\partial\theta_A}{\partial\tilde{\psi}}\cdot 2\int_{-\infty}^{\tilde{\psi}}\chi_{\tilde{D}}(\phi,\tilde{\psi}')e^{-2\mathcal{H}\theta_A(\phi,\tilde{\psi}')}\tilde{\tau}(\phi,\tilde{\psi}')\,d\tilde{\psi}'\\[4mm]
&+\omega_0 G_2*\frac{\partial^2\theta_A}{\partial\phi\partial\tilde{\psi}}\cdot 2 \int_{-\infty}^{\tilde{\psi}}\chi_{\tilde{D}}(\phi,\tilde{\psi}')e^{-2\mathcal{H}\theta_A(\phi,\tilde{\psi}')}\tilde{\tau}(\phi,\tilde{\psi}')\,d\tilde{\psi}'\\[4mm]
&-4\omega_0\int_{-\frac{\tilde{a}}{c}}^{\tilde{\psi}}\chi_{\tilde{D}}(\phi,\tilde{\psi}')e^{-2\mathcal{H}\theta_A(\phi,\tilde{\psi}')}\tilde{\tau}(\phi,\tilde{\psi}')\,d\tilde{\psi}'\cdot\frac{\partial^2\theta_A}{\partial\phi\partial\tilde{\psi}}*G_2\\[4mm]
&-\omega_0\left(\int_{-\infty}^{\tilde{\psi}}\chi_{\tilde{D}}(\phi,\tilde{\psi}')e^{-2\mathcal{H}\theta_A(\phi,\tilde{\psi}')}\,d\tilde{\psi}'\right)^2\cdot\frac{\partial^2\theta_A}{\partial\phi\partial\tilde{\psi}}*G_2+o(\omega_0^2)\\[4mm]
&\equiv\omega_0\mathcal{A}_1(\tilde{\tau},\theta_A)+\omega_0\mathcal{A}_2(\theta_A)+b(\theta_A)+o(\omega_0^2),
\end{split}
\end{equation*}
\medskip

\noindent We define the operator

\begin{equation}\label{G-operator}
\mathcal{G}(\tilde{\tau}; \omega_0,\theta_A)=\tilde{\tau}-
\omega_0\mathcal{A}_1(\tilde{\tau},\theta_A)-\omega_0\mathcal{A}_2(\theta_A)-b(\theta_A)+o(\omega_0^2),
\end{equation}
\medskip

\noindent where $\mathcal{G}(\tilde{\tau}; \omega_0,\theta_A):H^2_{even}\times\mathbb{R}\times H^2_{odd}\rightarrow H^2$ and to invert this operator in a neighborhod of $\omega_0=0$ we will use the Implicit function theorem \ref{IFT}. We observe that 

\begin{equation}\label{operator-tilde-tau}
\left\{\begin{array}{lll}
\mathcal{G}(\tilde{\tau};0,\theta_A) =0\\[4mm]
D_{\tilde{\tau}}\mathcal{G}(\tilde{\tau};0,\theta_A)=\tau_1.
\end{array}\right.
\end{equation}
\medskip

\noindent The equations \eqref{operator-tilde-tau} guarantees that in a neighborhod of $(\omega_0=0,\theta_A)$, there exists a smooth function $\tilde{\tau}^*(\omega_0,\theta_A)$, such that $\tilde{\tau}^*(0,\theta_A)=\tilde{\tau}$.

%
%
\bigskip

\subsection{Existence of Crapper waves in the presence of a small vortex patch}
In this section we prove the existence of a perturbation of the Crapper waves, with small gravity and small vorticity. We will prove the existence theorem (Theorem \ref{patch-existence}), by means of the implicit function theorem. However, to prove it we need an explicit parametrization for $\gamma(\alpha)$ in such a way that the operator, related to \eqref{fix-patch}, fulfils the hypotesis of the implicit function theorem. We define $\gamma(\alpha)$ as follows

\begin{equation}\label{gamma}
\gamma(\alpha)=
\left\{
\begin{array}{lll}
\displaystyle r\left(-\frac{\alpha+\pi}{\sin\alpha}\cos\alpha,-\alpha-\pi\right)\hspace{1.5cm}-\pi\leq \alpha<-\frac{\pi}{2}\\[5mm]
\displaystyle r\left(\frac{\alpha}{\sin\alpha}\cos\alpha,\alpha\right)\hspace{3.3cm}-\frac{\pi}{2}\leq \alpha\leq\frac{\pi}{2}\\[5mm]
\displaystyle r\left(-\frac{\alpha-\pi}{\sin\alpha}\cos\alpha,-\alpha+\pi\right)\hspace{2cm}\frac{\pi}{2}<\alpha\leq\pi,
\end{array}\right.
\end{equation}
\medskip

\noindent where $r\in \mathbb{R}$ the small radius. So that its derivative is

\begin{equation}
\partial_{\alpha}\gamma(\alpha)=
\left\{
\begin{array}{lll}
\displaystyle r\left(\frac{(\alpha+\pi)-\cos\alpha\sin\alpha}{\sin^2\alpha},-1\right)\hspace{1.5cm}-\pi\leq \alpha<-\frac{\pi}{2}\\[5mm]
\displaystyle r\left(\frac{\cos\alpha\sin\alpha-\alpha}{\sin^2\alpha},1\right)\hspace{3.3cm}-\frac{\pi}{2}\leq \alpha\leq\frac{\pi}{2}\\[5mm]
\displaystyle r\left(\frac{(\alpha-\pi)-\cos\alpha\sin\alpha}{\sin^2\alpha},-1\right)\hspace{2cm}\frac{\pi}{2}<\alpha\leq\pi
\end{array}\right.
\end{equation}
\medskip
 
 \begin{figure}[htbp]
\centering
\includegraphics[scale=0.8]{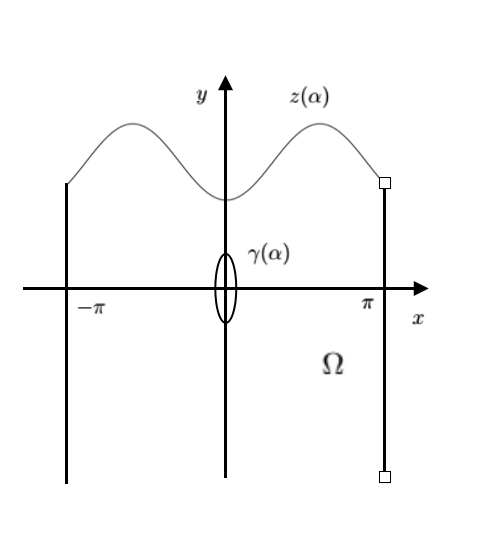}
\caption{The choice of $\gamma(\alpha)$.}\label{my-patch}
\end{figure}

\medskip
 
\noindent However,  we will work in a neighborhood of $\omega_0=0$, then we substitute $\tau=\mathcal{H}\theta_A+\omega_0 \tilde{\tau}^*$. 

\begin{subequations}
\label{patch-water-waves}
\begin{align}
\begin{split}
&\mathcal{F}_1(\theta_A, \tilde{\omega}, r; B,p,\omega_0)=\sinh(\mathcal{H}\theta_A+\omega_0\tilde{\tau}^*)\\[3mm]
&\hspace{2cm}-p e^{-\mathcal{H}\theta_A-\omega_0\tilde{\tau}^*}\left(\int_{-\pi}^{\alpha}\frac{e^{-\mathcal{H}\theta_A-\omega_0\tilde{\tau}^*}\sin\theta(\alpha',1)}{W(\alpha',1)}\,d\alpha'-1\right)\\[3mm]
&\hspace{2cm}+q W(\alpha,1) \frac{\partial(\theta_A+\omega_0\tilde{\theta}^*)}{\partial\alpha}-Be^{-\mathcal{H}\theta_A-\omega_0\tilde{\tau}^*}.
\end{split}\label{patch-water-waves-1}\\[5mm]
\begin{split}
&\mathcal{F}_2(\theta_A,\tilde{\omega}, r; B,p,\omega_0):=W(\alpha,1)\left(2BR(z(\alpha),\tilde{\omega}(\alpha))\cdot\partial_{\alpha}z(\alpha)+\tilde{\omega}(\alpha)\right.\\[3mm]
&\hspace{2cm}+\left.\frac{\omega_0}{2\pi}\int_{-\pi}^{\pi}\log|z(\alpha)-\gamma(\alpha')|\partial_{\alpha}\gamma(\alpha')\,d\alpha'\cdot\partial_{\alpha}z(\alpha)\right)+2
\end{split}\label{patch-water-waves-2}\\[5mm]
\begin{split}
&\mathcal{F}_3(\theta_A,\tilde{\omega}, r; B, p,\omega_0):=\frac{1}{2\pi}\int_{-\pi}^{\pi}\frac{(\gamma(\alpha)-z(\alpha'))^{\perp}}{|\gamma(\alpha)-z(\alpha')|^2}\cdot\tilde{\omega}(\alpha')\,d\alpha'\cdot \partial_{\alpha}\gamma(\alpha)^{\perp}\\[3mm]
&\hspace{2cm}+\frac{\omega_0}{2\pi}P.V.\int_{-\pi}^{\pi}\log|\gamma(\alpha)-\gamma(\alpha')|\partial_{\alpha}\gamma(\alpha')\,d\alpha'\cdot\partial_{\alpha}\gamma(\alpha)^{\perp}
\end{split}\label{patch-water-waves-3}
\end{align}
\end{subequations}
\bigskip

\noindent We have that  
$$(\mathcal{F}_1,\mathcal{F}_2,\mathcal{F}_3)(\theta_A, \tilde{\omega}, r; B,p,\omega_0):H^{2}_{odd}\times H^{1}_{even}\times\mathbb{R}^{{4}}\rightarrow H^{1}_{even}\times H^{1}_{even}\times H^{1}.$$
\medskip

\noindent The main theorem we want to prove is the following

\begin{theorem}\label{patch-existence}
Let us consider the water waves problem \eqref{v-euler}, with a small vortex patch and a small gravity $g$. Then, for some values of $A<A_0$, defined in  \eqref{Crapper-solutions}, there exist periodic solutions to \eqref{v-euler} with overhanging profile.
\end{theorem}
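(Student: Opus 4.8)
The plan is to repeat, with the necessary modifications, the Lyapunov--Schmidt scheme already carried out for the point vortex in Section \ref{point-existence-proof}, now applied to the three operators $(\mathcal{F}_1,\mathcal{F}_2,\mathcal{F}_3)$ of \eqref{patch-water-waves}. First I would use the solution $\tilde\tau^*(\omega_0,\theta_A)$ of the elliptic problem, obtained by inverting $\mathcal{G}$ in \eqref{G-operator} through Theorem \ref{IFT} (see \eqref{operator-tilde-tau}), so that $\tilde\tau$ and $\tilde\theta$, and hence the whole system \eqref{patch-water-waves}, become functions of $(\theta_A,\tilde\omega,r;B,p,\omega_0)$ alone. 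I would then check that the triple vanishes at the Crapper point $(\theta_c,\tilde\omega_c,r_*;0,0,0)$, where $r_*$ is the radius for which \eqref{gamma} traces a Crapper streamline. At $\omega_0=0$ one has $W(\alpha,1)\equiv1$ by \eqref{patch-W-alpha-rho}, so $\mathcal{F}_1$ collapses to the Crapper--Bernoulli equation \eqref{Crapper-Bernoulli-only-theta} and $\mathcal{F}_2$ to $2BR(z^c,\tilde\omega_c)\cdot\partial_\alpha z^c+\tilde\omega_c+2$, both vanishing exactly as in \eqref{F1}--\eqref{F2}. The new point is $\mathcal{F}_3$: its $\omega_0$-term drops out, while its first term is the velocity induced at $\gamma$ by the boundary sheet $\tilde\omega_c$, i.e.\ the Crapper velocity evaluated on $\gamma$, which is tangent to $\gamma$ precisely because \eqref{gamma} is chosen to follow a streamline; hence $\mathcal{F}_3(\theta_c,\tilde\omega_c,r_*;0,0,0)=0$.

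Next I would compute the Fr\'echet derivative in $(\theta_A,\tilde\omega,r)$ at this point. Since $r$ enters $\mathcal{F}_1,\mathcal{F}_2$, and $\tilde\omega$ enters $\mathcal{F}_1$, only through terms carrying a factor $\omega_0$, at $\omega_0=0$ the derivative is block lower triangular,
\[
D\mathcal{F}(\theta_c,\tilde\omega_c,r_*;0,0,0)=\begin{pmatrix}\Gamma & 0 & 0\\ D_{\theta_A}\mathcal{F}_2 & \mathcal{A}(z^c)+\mathcal{I} & 0\\ D_{\theta_A}\mathcal{F}_3 & D_{\tilde\omega}\mathcal{F}_3 & D_r\mathcal{F}_3\end{pmatrix},
\]
with $\Gamma\theta_1=\cosh(\mathcal{H}\theta_c)\mathcal{H}\theta_1+q\,\partial_\alpha\theta_1$ exactly as in \eqref{D-theta-F1}. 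Invertibility then reduces to that of the diagonal blocks. The first two are quoted: $\Gamma$ is injective with one-dimensional cokernel spanned by $\cos\theta_c$ (Lemma \ref{DF1-invertible}), and $\mathcal{A}(z^c)+\mathcal{I}$ is invertible (Lemma \ref{DF2-invertible}). The genuinely new block is $D_r\mathcal{F}_3$, and here the explicit parametrization \eqref{gamma} does its work: it is arranged so that, along the one-parameter family $\gamma=\gamma_r$, the residual normal velocity on $\partial D$ has a fixed angular profile whose amplitude depends nondegenerately on $r$, so that $D_r\mathcal{F}_3$ is an isomorphism onto the effective one-dimensional range of $\mathcal{F}_3$.

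With the diagonal blocks understood, I would run the same reduction as for the point vortex. Because $\Gamma$ is not onto, I project $\mathcal{F}_1$ by $\mathcal{I}-\Pi$, where $\Pi$ is the $L^2$-projection onto $\langle\cos\theta_c\rangle$ and $\Pi\Gamma=0$ by \eqref{Gamma-coseno}; the reduced map $\tilde{\mathcal{F}}=((\mathcal{I}-\Pi)\mathcal{F}_1,\mathcal{F}_2,\mathcal{F}_3)$ then has invertible derivative in $(\theta_A,\tilde\omega,r)$ at the Crapper point, and Theorem \ref{IFT} yields a smooth family $(\theta_A,\tilde\omega,r)=\Theta_c(B,p,\omega_0)$ with $\Theta_c(0,0,0)=(\theta_c,\tilde\omega_c,r_*)$ solving $\tilde{\mathcal{F}}=0$. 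The leftover scalar equation $f(B;p,\omega_0)=(\cos\theta_c,\mathcal{F}_1(\Theta_c;B,p,\omega_0))=0$ is solved in $B$: one checks $f(0;0,0)=0$ and, as in the point case, $D_B f(0;0,0)=(\cos\theta_c,e^{-\mathcal{H}\theta_c})=-2\pi\neq0$, so Theorem \ref{IFT} produces $B^*(p,\omega_0)$ with $B^*(0,0)=0$ and $f(B^*;p,\omega_0)=0$. This yields, for all small $(p,\omega_0)$, a genuine solution of \eqref{patch-water-waves} and hence of \eqref{v-euler}; since it is an $H^2$-small perturbation of the Crapper interface $z^c$ and overhanging is an open condition on the profile, for $|A|<A_0$ in the range where $z^c$ overhangs the constructed wave overhangs as well.

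The main obstacle I anticipate is precisely the $\mathcal{F}_3$/$r$ block. Dimensionally $\mathcal{F}_3$ is a function-valued constraint (the patch must be stationary for every $\alpha$) while $r$ is a single scalar, so a naive count shows $D_r\mathcal{F}_3:\mathbb{R}\to H^1$ cannot be surjective onto all of $H^1$. The reason for choosing \eqref{gamma} rather than a generic curve is exactly to force $\mathcal{F}_3$, restricted to this family, to take values in an effectively one-dimensional subspace, the tangency being satisfied mode-by-mode through the streamline structure up to a single scalar controlled by $r$. Verifying this collapse rigorously, together with the compatibility of the off-diagonal terms $D_{\theta_A}\mathcal{F}_3$ and $D_{\tilde\omega}\mathcal{F}_3$ with the projected target spaces, is the technical heart of the argument and the step I would scrutinize most carefully.
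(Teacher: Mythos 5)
Your overall skeleton (vanishing at a Crapper base point, block--triangular Fr\'echet derivative, Lemmas \ref{DF1-invertible} and \ref{DF2-invertible} for the first two diagonal blocks, Lyapunov--Schmidt projection $\mathcal{I}-\Pi$, and the final scalar equation $f(B;p,\omega_0)=0$ solved via $D_Bf=-2\pi$) is the paper's argument. The genuine gap is your base point. You bifurcate from $(\theta_c,\tilde\omega_c,r_*;0,0,0)$ with $r_*>0$ chosen so that the closed curve \eqref{gamma} ``traces a Crapper streamline'', and you use this alleged tangency to get $\mathcal{F}_3=0$ there. No such $r_*$ exists. At $\omega_0=0$ the condition $\mathcal{F}_3=0$ reads $v(\gamma(\alpha))\cdot(\partial_\alpha\gamma(\alpha))^\perp=0$ for all $\alpha$, and since $v=\nabla^\perp\psi$ this is $\frac{d}{d\alpha}\bigl[\psi(\gamma(\alpha))\bigr]=0$, i.e.\ the Crapper stream function is constant on the closed curve $\gamma$. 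But $\psi$ is harmonic inside the fluid and has no critical points there (the hodograph map $(\phi,\psi)$ is a bijection of $\Omega$ onto the half-strip $\tilde\Omega$), so by the maximum principle $\psi$ would have to be constant on the whole region enclosed by $\gamma$ --- a contradiction. Equivalently: Crapper streamlines are the images of the horizontal lines $\psi=\mathrm{const}$, hence periodic non-closed curves spanning the full period, and no closed loop of the family \eqref{gamma} can ever be one of them. So $\mathcal{F}_3(\theta_c,\tilde\omega_c,r_*;0,0,0)\neq 0$ for every $r_*>0$, and your implicit-function scheme has no point to start from.

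The paper's fix is to bifurcate from the \emph{degenerate} radius $r=0$: then $\gamma\equiv 0$ and $\partial_\alpha\gamma=r\,\partial_\alpha\tilde\gamma\equiv 0$, so every term of $\mathcal{F}_3$ vanishes identically and no streamline property is needed at all. The nondegeneracy is recovered one order up, in the derivative: at $(\theta_c,\tilde\omega_c,0;0,0,0)$ all terms of $D_r\mathcal{F}_3$ with a surviving factor of $r$ drop, a parity argument (the evenness of $z_2^c$, $\tilde\omega_c$ and oddness of $z_1^c$ from \eqref{z-parity}) kills the remaining logarithmic term, and one is left with
\begin{equation*}
D_r\mathcal{F}_3(\theta_c,\tilde\omega_c,0;0,0,0)
=-\frac{r_1\,\partial_\alpha\tilde\gamma_2(\alpha)}{2\pi}
\int_{-\pi}^{\pi}\frac{z_2^c(\alpha')+1}{\bigl(z_1^c(\alpha')\bigr)^2+\bigl(z_2^c(\alpha')+1\bigr)^2}\,\tilde\omega_c(\alpha')\,d\alpha',
\end{equation*}
which is nonzero for every $\alpha$ precisely because the odd-looking parametrization \eqref{gamma} was engineered to have $\partial_\alpha\tilde\gamma_2\neq 0$ on all of $[-\pi,\pi]$ --- that, and not any streamline structure, is why \eqref{gamma} is chosen. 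Once the base point is replaced by $r=0$, the rest of your argument goes through as in the paper. Your closing worry about $D_r\mathcal{F}_3:\mathbb{R}\to H^1$ failing to be surjective is legitimate, and in fact it applies to the paper's own proof, which asserts invertibility of this block only on the strength of $\partial_\alpha\tilde\gamma_2\neq 0$ (injectivity) and leaves the restriction of the target space, or a further one-dimensional reduction in the spirit of $\Pi$, implicit; but the cure you propose for it --- collapsing the range of $\mathcal{F}_3$ via the streamline property --- is exactly what the first paragraph rules out.
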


\bigskip

\subsubsection{Proof of Theorem \ref{patch-existence}}
We will analyse the three operators \eqref{patch-water-waves} that identify our problem. And we will show they satisfy the hypotesis of the implicit function theorem. First of all we have to show that 

\begin{equation}\label{operators-zero}
(\mathcal{F}_1,\mathcal{F}_2,\mathcal{F}_3)(\theta_c,\tilde{\omega}_c,0;0,0,0)=(0,0,0).
\end{equation}
\medskip

\noindent For $\mathcal{F}_1$, we use \eqref{Crapper-Bernoulli}

\begin{equation*}
\begin{split}
\mathcal{F}_1(\theta_c,\tilde{\omega}_c,0;0,0,0)&=\sinh(\mathcal{H}\theta_c)+q\frac{\partial\theta_c}{\partial\alpha}=0
\end{split}
\end{equation*}
\medskip

\noindent For $\mathcal{F}_2$ it holds by construction \eqref{patch-v-tangential}. For $\mathcal{F}_3$, we write explicitly $\gamma(\alpha)$ as in \eqref{gamma} and by taking the radius $r$ to be $0$. Thus $\mathcal{F}_3(\theta_c,\tilde{\omega}_c,0;0,0,0,0)$ satisfies \eqref{operators-zero}.\\

\noindent The most considerable part is to prove the invertibility of the derivatives. We observe that $D_{\tilde{\omega}}\mathcal{F}_1=D_{r}\mathcal{F}_1=0$, so it remains to compute $D_{\theta_A}\mathcal{F}_1$.

\begin{equation*}
\begin{split}
&D_{\theta_A}\mathcal{F}_1=\frac{d}{d\mu}\left[\sinh(\mathcal{H}\theta_A+\mu\mathcal{H}\theta_1+\omega_0\tilde{\tau}^*(\theta_A+\mu\theta_1))+qW_{\mu}\frac{\partial(\theta_A+\mu\theta_1+\omega_0\tilde{\theta}^*(\theta_A+\mu\theta_1))}{\partial\alpha}\right.\\[3mm]
&\left.-pe^{-\mathcal{H}\theta_A-\mu\mathcal{H}\theta_1+\omega_0\tilde{\tau}^*(\theta_A+\mu\theta_1)}\int_{-\pi}^{\alpha}\frac{e^{-\mathcal{H}\theta_A-\mu\mathcal{H}\theta_1-\omega_0\tilde{\tau}^*(\theta_A+\mu\theta_1)}\sin(\theta_A+\mu\theta_1+\omega_0\tilde{\theta}^*(\theta_A+\mu\theta_1))}{W_\mu}
\,d\alpha'\right.\\[3mm]
&\left.-Be^{-\mathcal{H}\theta_A-\mu\mathcal{H}\theta_1-\omega_0\tilde{\tau}^*(\theta_A+\mu\theta_1)}\right]_{|\mu=0}\\[5mm]
&=\cosh(\mathcal{H}\theta_A+\omega_0\tilde{\tau}^*(\theta_A))\cdot\left(\mathcal{H}\theta_1+\omega_0\left[\frac{d}{d\mu}\tilde{\tau}^*(\theta_A+\mu\theta_1)\right]_{|\mu=0}\right)\\[3mm]
&+ q\left[\frac{d}{d\mu}W_{\mu}\right]_{|\mu=0}\frac{\partial(\theta_A+\omega_0\tilde{\theta}^*(\theta_A))}{\partial\alpha}+q W\frac{\partial}{\partial\alpha}\left(\theta_1+\omega_0\left[\frac{d}{d\mu}\tilde{\theta}^*(\theta_A+\mu\theta_1)\right]_{|\mu=0}\right)\\[3mm]
&+p e^{-\mathcal{H}\theta_A-\omega_0\tilde{\tau}^*(\theta_A)}\left(\mathcal{H}\theta_1+\omega_0\left[\frac{d}{d\mu}\tilde{\tau}^*(\theta_A+\mu\theta_1)\right]_{|\mu=0}\right)\cdot\int_{-\pi}^{\alpha}\frac{e^{-\mathcal{H}\theta_A-\omega_0\tilde{\tau}^*(\theta_A)}\sin(\theta_A+\omega_0\tilde{\theta}^*(\theta_A))}{W}
\,d\alpha'\\[5mm]
&-p e^{-\mathcal{H}\theta_A-\omega_0\tilde{\tau}^*(\theta_A)}\int_{-\pi}^{\alpha}\frac{e^{-\mathcal{H}\theta_A-\omega_0\tilde{\tau}^*(\theta_A)}}{W}\left(-\mathcal{H}\theta_1-\omega_0\left[\frac{d}{d\mu}\tilde{\tau}^*(\theta_A+\mu\theta_1)\right]_{|\mu=0}\right)\sin(\theta_A+\omega_0\tilde{\theta}^*(\theta_A))\,d\alpha'\\[5mm]
&-p e^{-\mathcal{H}\theta_A-\omega_0\tilde{\tau}^*(\theta_A)}\int_{-\pi}^{\alpha}\frac{e^{-\mathcal{H}\theta_A-\omega_0\tilde{\tau}^*(\theta_A)}}{W}\cos(\theta_A+\omega_0\tilde{\theta}^*(\theta_A))\left(\theta_1+\omega_0\left[\frac{d}{d\mu}\tilde{\theta}^*(\theta_A+\mu\theta_1)\right]_{|\mu=0}\right)\,d\alpha'\\[5mm]
&+p e^{-\mathcal{H}\theta_A-\omega_0\tilde{\tau}^*(\theta_A)}\int_{-\pi}^{\alpha}\frac{e^{-\mathcal{H}\theta_A-\omega_0\tilde{\tau}^*(\theta_A)}\sin(\theta_A+\omega_0\tilde{\theta}^*(\theta_A))}{W^2}\left[\frac{d}{d\mu}W_{\mu}\right]_{|\mu=0}\,d\alpha'\\[3mm]
&-Be^{-\mathcal{H}\theta_A-\omega_0\tilde{\tau}^*(\theta_A)}\left(-\mathcal{H}\theta_1-\omega_0\left[\frac{d}{d\mu}\tilde{\tau}^*(\theta_A+\mu\theta_1)\right]_{|\mu=0}\right).
\end{split}
\end{equation*}
\medskip

\begin{remark}\label{tilde-tau-frechet-derivative}
\noindent The equation for $W_{\mu}(\alpha,1)$ is the following

\begin{equation}\label{patch-W-mu}
W_{\mu}(\alpha,1)=1+\int_{0}^{1} \omega_0\chi_{\tilde{D}_{\mathcal{A}}}(\alpha,\rho') \frac{e^{-2\mathcal{H}\theta_A-2\mu\mathcal{H}\theta_1-2\tilde\omega_0\tilde{\tau}^*(\theta_A+\mu\theta_1)}}{\rho'}\,d\rho'
\end{equation}
\bigskip

\noindent Now we have to compute $\displaystyle\frac{d W_{\mu}}{d\mu}$, that is

\begin{equation}\label{patch-DW-mu}
\begin{split}
&\left[\frac{d W_{\mu}}{d\mu}\right]_{|\mu=0}=\left[\frac{d}{d\mu}\left(1+\int_{0}^{1} \omega_0\chi_{\tilde{D}_{\mathcal{A}}}(\alpha,\rho') \frac{e^{-2\mathcal{H}\theta_A-2\mu\mathcal{H}\theta_1-2\omega_0\tilde{\tau}^*(\theta_A+\mu\theta_1)}}{\rho'}\,d\rho'\right)\right]_{|\mu=0}\\[5mm]
&=\left[\int_{0}^{1}\omega_0\chi_{\tilde{D}_{\mathcal{A}}}(\alpha,\rho') \frac{e^{-2\mathcal{H}\theta_A-2\mu\mathcal{H}\theta_1-2\omega_0\tilde{\tau}^*(\theta_A+\mu\theta_1)}}{\rho'}\cdot\right.\\[4mm]
&\hspace{3cm}\left.\cdot\left(-2\mathcal{H}\theta_1-2\omega_0\frac{d}{d\mu}\tilde{\tau}^*(\theta_A+\mu\theta_1)\right)\,d\rho'\right]_{|\mu=0}\\[5mm]
&=\omega_0\int_{0}^{1}\chi_{\tilde{D}_{\mathcal{A}}}(\alpha,\rho') \frac{e^{-2\mathcal{H}\theta_A-2\omega_0\tilde{\tau}^*(\theta_A)}}{\rho'}\left(-2\mathcal{H}\theta_1-2\omega_0\left[\frac{d}{d\mu}\tilde{\tau}^*(\theta_A+\mu\theta_1)\right]_{\mu=0}\right)\,d\rho'
\end{split}
\end{equation}
\medskip

\noindent It remains to observe that for our purpose it is sufficient to have the existence of $\frac{d}{d\mu}\tilde{\tau}^*(\theta_A+\mu\theta_1)$, coming from the elliptic equation \eqref{elliptic-alpha-rho-patch}. Indeed we must compute the Fr\'echet derivative at the point $(\theta_c,\tilde{\omega_c},0;0,0,0)$ and, as we can see in $D_{\theta_A}\mathcal{F}_1$, the term $\displaystyle\left[\frac{d}{d\mu}\tilde{\tau}(\theta_A+\mu\theta_1)\right]_{|\mu=0}$ is always multiplied by $\omega_0$ that it is taken equal to zero. And we can state that also $\displaystyle \left[\frac{d W_{\mu}}{d\mu}\right]_{|\mu=0}$ is zero.
\end{remark}
\medskip

\noindent The remark \ref{tilde-tau-frechet-derivative} implies that

\begin{equation*}
D_{\theta_A}\mathcal{F}_1(\theta_c,\tilde{\omega}_c,0;0,0,0)=
\cosh(\mathcal{H}\theta_c)\cdot\mathcal{H}\theta_1+q\frac{\partial\theta_1}{\partial\alpha}.
\end{equation*}
\medskip

\noindent For the second operator, we observe that for computing $D_{\theta_A}\mathcal{F}_2$, we need to use the equation for $z(\alpha)$ can be obtained by integrating \eqref{patch-interface-derivative} and we define 

$$z_\mu(\alpha)=-\int_{-\pi}^{\alpha} \frac{e^{-\mathcal{H}\theta_A-\mu\mathcal{H}\theta_1-\omega_0\tilde{\tau}^*(\theta_A+\mu\theta_1)+i(\theta_A+\mu\theta_1+\omega_0
\tilde{\theta}^*(\theta_A+\mu\theta_1))}}{W_{\mu}(\alpha',1)}\,d\alpha'-e_2$$
\medskip

\noindent where $W_{\mu}(\alpha',1)$ is defined in \eqref{patch-W-mu}. \\

\noindent In the same way, we did for computing $D_{\theta_A}\mathcal{F}_1$, we can compute $D_{\theta_A}\mathcal{F}_2$ and then at the point $(\theta_c,\tilde{\omega}_c,0;0,0,0)$ we will get $D_{\theta_A}\mathcal{F}_2(\theta_c,\tilde{\omega}_c,0;0,0,0)$.\\

\noindent Instead, it is important to compute $D_{\tilde{\omega}}\mathcal{F}_2$.

\begin{equation*}
\begin{split}
&D_{\tilde{\omega}}\mathcal{F}_2=\left[\frac{d}{d\mu}\mathcal{F}_2(\theta_A,\tilde{\omega}+\mu\omega_1,r;p,\varepsilon,\omega_0,B)\right]_{|\mu=0}=\left[\frac{d}{d\mu}\left[2 W(\alpha,1) BR(z(\alpha),\tilde{\omega}(\alpha)+\mu\omega_1)\cdot\partial_{\alpha}z(\alpha)\right.\right.\\[3mm]
&\left.\left.+W(\alpha,1) (\tilde{\omega}(\alpha)+\mu\omega_1(\alpha))+W(\alpha,1)\frac{\omega_0}{2\pi}\int_{-\pi}^{\pi}\log|z(\alpha)-\gamma(\alpha')|\partial_{\alpha}\gamma(\alpha')\,d\alpha'\cdot\partial_{\alpha}z(\alpha)+2\right]\right]_{|\mu=0}\\[3mm]
&=2W(\alpha,1)\textrm{P.V.}\frac{1}{2\pi}\int_{-\pi}^{\pi}\frac{(z(\alpha)-z(\alpha'))^{\perp}}{|z(\alpha)-z(\alpha')|^2}\cdot \omega_1(\alpha')\,d\alpha'\cdot \partial_{\alpha}z(\alpha)+W(\alpha,1)\omega_1(\alpha).
\end{split}
\end{equation*}
\medskip

\noindent At the Crapper point we have

$$D_{\tilde{\omega}}\mathcal{F}_2(\theta_c,\tilde{\omega}_c,0;0,0,0,0)=2\textrm{P.V.}\frac{1}{2\pi}\int_{-\pi}^{\pi}\frac{(z(\alpha)-z(\alpha'))^{\perp}}{|z(\alpha)-z(\alpha')|^2}\cdot \omega_1(\alpha')\,d\alpha'\cdot \partial_{\alpha}z(\alpha)+\omega_1(\alpha).$$
\bigskip

\noindent It remains to compute the last derivate 
\begin{equation*}
\begin{split}
&D_{r}\mathcal{F}_2=\frac{d}{d\mu}\left[\mathcal{F}_2(\theta_A,\tilde{\omega},r+\mu r_1;B,p,\omega_0)\right]_{|\mu=0}\\[3mm]
&=\left[\frac{d}{d\mu}\left[2 W(\alpha,1) BR(z(\alpha),\tilde{\omega}(\alpha))\cdot\partial_{\alpha}z(\alpha)+W(\alpha,1)\tilde{\omega}(\alpha)\right.\right.\\[3mm]
&+W(\alpha,1) \frac{\omega_0}{2\pi}\int_{-\pi}^{-\frac{\pi}{2}}\log\sqrt{\left(z_1(\alpha)+(r+\mu r_1)\frac{\alpha'+\pi}{\sin\alpha'}\cos\alpha'\right)^2+\left(z_2(\alpha)+(r+\mu r_1)(\alpha'+\pi)\right)^2}\\[3mm]
&\hspace{3cm}\cdot (r+\mu r_1)\left(\frac{(\alpha'+\pi)-\cos\alpha'\sin\alpha'}{\sin^2\alpha'},-1\right)\,d\alpha' \cdot\partial_{\alpha}z(\alpha)\\[3mm]
&+W(\alpha,1)\frac{\omega_0}{2\pi}\int_{-\frac{\pi}{2}}^{\frac{\pi}{2}}\log\sqrt{\left(z_1(\alpha)-(r+\mu r_1)\frac{\alpha'}{\sin\alpha'}\cos\alpha'\right)^2+\left(z_2(\alpha)-(r+\mu r_1)(\alpha')\right)^2}\\[3mm]
&\hspace{3cm}\cdot (r+\mu r_1)\left(\frac{\cos\alpha'\sin\alpha'-\alpha'}{\sin^2\alpha'},1\right)\,d\alpha' \cdot\partial_{\alpha}z(\alpha)\\[4mm]
&+W(\alpha,1)\frac{\omega_0}{2\pi}\int_{\frac{\pi}{2}}^{\pi}\log\sqrt{\left(z_1(\alpha)+(r+\mu r_1)\frac{\alpha'-\pi}{\sin\alpha'}\cos\alpha'\right)^2+\left(z_2(\alpha)+(r+\mu r_1)(\alpha'-\pi)\right)^2}\\[3mm]
&\hspace{3cm}\left.\cdot (r+\mu r_1)\left(\frac{(\alpha'-\pi)-\cos\alpha'\sin\alpha'}{\sin^2\alpha'},-1\right)\,d\alpha' \cdot\partial_{\alpha}z(\alpha)\right]_{|\mu=0}\\[4mm]
\end{split}
\end{equation*}
\begin{equation*}
\begin{split}
&=W(\alpha,1)\frac{\omega_0}{2\pi}r\cdot r_1\int_{-\pi}^{-\frac{\pi}{2}}\frac{z_1(\alpha)+r\frac{\alpha'+\pi}{\sin\alpha'}\cos\alpha'+z_2(\alpha)+r(\alpha'+\pi)}{\left(z_1(\alpha)+r\frac{\alpha'+\pi}{\sin\alpha'}\cos\alpha'\right)^2+\left(z_2(\alpha)+r(\alpha'+\pi)\right)^2}\\[3mm]
&\hspace{3cm}\cdot\left(\frac{(\alpha'+\pi)-\cos\alpha'\sin\alpha'}{\sin^2\alpha'},-1\right)\,d\alpha' \cdot\partial_{\alpha}z(\alpha)\\[3mm]
&+W(\alpha,1)\frac{\omega_0}{2\pi}\cdot r_1\int_{-\pi}^{-\frac{\pi}{2}}\log\sqrt{\left(z_1(\alpha)+r\frac{\alpha'+\pi}{\sin\alpha'}\cos\alpha'\right)^2+\left(z_2(\alpha)+r(\alpha'+\pi)\right)^2}\\[3mm]
&\hspace{3cm}\cdot\left(\frac{(\alpha'+\pi)-\cos\alpha'\sin\alpha'}{\sin^2\alpha'},-1\right)\,d\alpha' \cdot\partial_{\alpha}z(\alpha)\\[3mm]
&+W(\alpha,1)\frac{\omega_0}{2\pi}r\cdot r_1\int_{-\frac{\pi}{2}}^{\frac{\pi}{2}}\frac{z_1(\alpha)-r\frac{\alpha'}{\sin\alpha'}\cos\alpha'+z_2(\alpha)-r(\alpha')}{\left(z_1(\alpha)-r\frac{\alpha'}{\sin\alpha'}\cos\alpha'\right)^2+\left(z_2(\alpha)-r\alpha'\right)^2}\\[3mm]
&\hspace{3cm}\cdot\left(\frac{\cos\alpha'\sin\alpha'-\alpha'}{\sin^2\alpha'},1\right)\,d\alpha' \cdot\partial_{\alpha}z(\alpha)\\[3mm]
&+W(\alpha,1)\frac{\omega_0}{2\pi}\cdot r_1\int_{-\frac{\pi}{2}}^{\frac{\pi}{2}}\log\sqrt{\left(z_1(\alpha)-r\frac{\alpha'}{\sin\alpha'}\cos\alpha'\right)^2+\left(z_2(\alpha)-r\alpha'\right)^2}\\[3mm]
&\hspace{3cm}\cdot\left(\frac{\cos\alpha'\sin\alpha'-\alpha'}{\sin^2\alpha'},1\right)\,d\alpha' \cdot\partial_{\alpha}z(\alpha)
\end{split}
\end{equation*}
\begin{equation*}
\begin{split}
&+W(\alpha,1)\frac{\omega_0}{2\pi}r\cdot r_1\int_{\frac{\pi}{2}}^{\pi}\frac{z_1(\alpha)+r\frac{\alpha'-\pi}{\sin\alpha'}\cos\alpha'+z_2(\alpha)+r(\alpha'-\pi)}{\left(z_1(\alpha)+r\frac{\alpha'-\pi}{\sin\alpha'}\cos\alpha'\right)^2+\left(z_2(\alpha)+r(\alpha'-\pi)\right)^2}\\[3mm]
&\hspace{3cm}\cdot\left(\frac{(\alpha'-\pi)-\cos\alpha'\sin\alpha'}{\sin^2\alpha'},-1\right)\,d\alpha' \cdot\partial_{\alpha}z(\alpha)\\[3mm]
&+W(\alpha,1)\frac{\omega_0}{2\pi}\cdot r_1\int_{\frac{\pi}{2}}^{\pi}\log\sqrt{\left(z_1(\alpha)+r\frac{\alpha'-\pi}{\sin\alpha'}\cos\alpha'\right)^2+\left(z_2(\alpha)+r(\alpha'-\pi)\right)^2}\\[3mm]
&\hspace{3cm}\cdot\left(\frac{(\alpha'-\pi)-\cos\alpha'\sin\alpha'}{\sin^2\alpha'},-1\right)\,d\alpha' \cdot\partial_{\alpha}z(\alpha)
\end{split}
\end{equation*}
\medskip

\noindent When we evaluate this derivative at $(\theta_c,\tilde{\omega}_c,0;0,0,0)$, we get

$$D_{r}\mathcal{F}_2(\theta_c,\tilde{\omega}_c,0;0,0,0)=0.$$
\medskip

\noindent For the last operator $\mathcal{F}_3$ we have to compute the derivates, but for $D_{\theta_A}\mathcal{F}_3$ and $D_{\tilde{\omega}}\mathcal{F}_3$, we have just to substitute $\theta_A\mapsto \theta_A+\mu\theta_1$ and $\tilde{\omega}\mapsto\tilde{\omega}+\mu\omega_1$, respectively and compute the derivatives as we did for the previous operators. Then we will compute them at the Crapper point, so that we get $D_{\theta_A}\mathcal{F}_3(\theta_c,\tilde{\omega}_c,0;0,0,0)$ and $D_{\tilde{\omega}}\mathcal{F}_3(\theta_c,\tilde{\omega}_c,0;0,0,0)$. In order to apply the implicit function theorem the relevant derivative for the third operator is the one with respect to $r$. The presence of $r$ is in the definition of $\gamma(\alpha)$ in \eqref{gamma}, so we rewrite $\mathcal{F}_3$ in a convenient way.

\begin{equation*}
\begin{split}
&\mathcal{F}_3(\theta_A,\tilde{\omega},r;B,p,\omega_0)=\frac{-\partial_{\alpha}\gamma_2(\alpha)}{2\pi}\int_{-\pi}^{\pi}\frac{-\gamma_2(\alpha)+z_2(\alpha')}{(\gamma_1(\alpha)-z_1(\alpha'))^2+(\gamma_2(\alpha)-z_2(\alpha'))^2}\tilde{\omega}(\alpha)\,d\alpha'\\[3mm]
&\hspace{1cm}+\frac{\partial_{\alpha}\gamma_1(\alpha)}{2\pi}\int_{-\pi}^{\pi}\frac{\gamma_1(\alpha)-z_1(\alpha')}{(\gamma_1(\alpha)-z_1(\alpha'))^2+(\gamma_2(\alpha)-z_2(\alpha'))^2}\tilde{\omega}(\alpha)\,d\alpha'\\[3mm]
&\hspace{1cm}-\frac{\omega_0}{2\pi}\partial_{\alpha}\gamma_2(\alpha)\textrm{P.V.}\int_{-\pi}^{\pi}\log\sqrt{(\gamma_1(\alpha)-\gamma_1(\alpha'))^2+(\gamma_2(\alpha)-\gamma_2(\alpha'))^2}\hspace{0.2cm}\partial_{\alpha}\gamma_1(\alpha')\,d\alpha'\\[3mm]
&\hspace{1cm}+\frac{\omega_0}{2\pi}\partial_{\alpha}\gamma_1(\alpha)\textrm{P.V.}\int_{-\pi}^{\pi}\log\sqrt{(\gamma_1(\alpha)-\gamma_1(\alpha'))^2+(\gamma_2(\alpha)-\gamma_2(\alpha'))^2}\hspace{0.2cm}\partial_{\alpha}\gamma_2(\alpha')\,d\alpha'.
\end{split}
\end{equation*}
\bigskip

\noindent In order to simplify the computation we will define $\gamma(\alpha)=r\hspace{0.1cm}(\tilde{\gamma}_1(\alpha),\tilde{\gamma}_2(\alpha))$ and $\partial_{\alpha}\gamma(\alpha)=r\hspace{0.1cm}(\partial_{\alpha}\tilde{\gamma}_1(\alpha),\partial_{\alpha}\tilde{\gamma}_2(\alpha))$.
\medskip

\begin{equation*}
\begin{split}
&D_r\mathcal{F}_3=\frac{d}{d\mu}\left[\mathcal{F}_3(\theta_A,\tilde{\omega},r+\mu r_1;B,p,\omega_0)\right]_{|\mu=0}=\\[3mm]
&=-\frac{r_1\partial_{\alpha}\tilde{\gamma}_2(\alpha)}{2\pi}\int_{-\pi}^{\pi}\frac{-r\tilde{\gamma}_2(\alpha)+z_2(\alpha')+1}{(r\tilde{\gamma}_1(\alpha)-z_1(\alpha'))^2+(r\tilde{\gamma}_2(\alpha)-z_2(\alpha')-1)^2}\tilde{\omega}(\alpha')\,d\alpha'\\[3mm]
&-\frac{r\partial_{\alpha}\tilde{\gamma}_2(\alpha)}{2\pi}\left\{\int_{-\pi}^{\pi}\frac{-r_1\tilde{\gamma}_2(\alpha)}{(r\tilde{\gamma}_1(\alpha)-z_1(\alpha'))^2+(r\tilde{\gamma}_2(\alpha)-z_2(\alpha')-1)^2}\tilde{\omega}(\alpha')\,d\alpha'\right.\\[3mm]
&\hspace{2cm}-\int_{-\pi}^{\pi}\frac{-r\tilde{\gamma}_2(\alpha)+z_2(\alpha')+1}{\left[(r\tilde{\gamma}_1(\alpha)-z_1(\alpha'))^2+(r\tilde{\gamma}_2(\alpha)-z_2(\alpha')-1)^2\right]^2}\tilde{\omega}(\alpha')\\[3mm]
&\hspace{3cm}\left.\cdot\left[2(r\tilde{\gamma}_1(\alpha)-z_1(\alpha'))r_1\tilde{\gamma}_1(\alpha)+2(r\tilde{\gamma}_2(\alpha)-z_2(\alpha')-1)r_1\tilde{\gamma}_2(\alpha)\right]\,d\alpha'\right\}\\[4mm]
&+\frac{r_1\partial_{\alpha}\tilde{\gamma}_1(\alpha)}{2\pi}\int_{-\pi}^{\pi}\frac{r\tilde{\gamma}_1(\alpha)-z_1(\alpha')}{(r\tilde{\gamma}_1(\alpha)-z_1(\alpha'))^2+(r\tilde{\gamma}_2(\alpha)-z_2(\alpha')-1)^2}\tilde{\omega}(\alpha')\,d\alpha'\\[3mm]
&+\frac{r\partial_{\alpha}\tilde{\gamma}_1(\alpha)}{2\pi}\left\{\int_{-\pi}^{\pi}\frac{r_1\tilde{\gamma}_1(\alpha)}{(r\tilde{\gamma}_1(\alpha)-z_1(\alpha'))^2+(r\tilde{\gamma}_2(\alpha)-z_2(\alpha')-1)^2}\tilde{\omega}(\alpha')\,d\alpha'\right.\\[3mm]
&\hspace{2cm}-\int_{-\pi}^{\pi}\frac{r\tilde{\gamma}_1(\alpha)-z_1(\alpha')}{\left[(r\tilde{\gamma}_1(\alpha)-z_1(\alpha'))^2+(r\tilde{\gamma}_2(\alpha)-z_2(\alpha')-1)^2\right]^2}\tilde{\omega}(\alpha')\\[3mm]
&\hspace{3cm}\left.\cdot\left[2(r\tilde{\gamma}_1(\alpha)-z_1(\alpha'))r_1\tilde{\gamma}_1(\alpha)+2(r\tilde{\gamma}_2(\alpha)-z_2(\alpha')-1)r_1\tilde{\gamma}_2(\alpha)\right]\,d\alpha'\right\}
\end{split}
\end{equation*}
\begin{equation*}
\begin{split}
&-\frac{\omega_0}{2\pi}2 r r_1\partial_{\alpha}\tilde{\gamma}_2(\alpha)\textrm{P.V.}\int_{-\pi}^{\pi}\log\left(r\sqrt{(\tilde{\gamma}_1(\alpha)-\tilde{\gamma}_1(\alpha'))^2+(\tilde{\gamma}_2(\alpha)-\tilde{\gamma}_2(\alpha'))^2}\right)\partial_{\alpha}\tilde{\gamma}_1(\alpha')\,d\alpha'\\[3mm]
&-\frac{\omega_0}{2\pi}r r_1\partial_{\alpha}\tilde{\gamma}_2(\alpha)\int_{-\pi}^{\pi}\partial_{\alpha}\tilde{\gamma}_1(\alpha')\,d\alpha'\\[3mm]
&+\frac{\omega_0}{2\pi}2rr_1\partial_{\alpha}\tilde{\gamma}_1(\alpha)\textrm{P.V.}\int_{-\pi}^{\pi}\log\left(r\sqrt{(\tilde{\gamma}_1(\alpha)-\tilde{\gamma}_1(\alpha'))^2+(\tilde{\gamma}_2(\alpha)-\tilde{\gamma}_2(\alpha'))^2}\right)\partial_{\alpha}\tilde{\gamma}_2(\alpha')\,d\alpha'\\[3mm]
&+\frac{\omega_0}{2\pi}r r_1\partial_{\alpha}\tilde{\gamma}_1(\alpha)\int_{-\pi}^{\pi}\partial_{\alpha}\tilde{\gamma}_2(\alpha')\,d\alpha'
\end{split}
\end{equation*}
\bigskip

\begin{remark}
We notice that all the terms above for $r=0$ disappear except for the first one and the third one. Moreover, by computing them at the Crapper point $(\theta_c, \tilde{\omega}_c)$ it follows that also the third will be zero because of the parity of the Crapper curve $z^c(\alpha)$ (see \eqref{z-parity}) and of $\tilde{\omega}(\alpha)$ which is even. Hence, in order to have the Fr\'echet derivative different from zero for every $\alpha\in[-\pi,\pi]$, we will choose $\gamma(\alpha)$ as \eqref{gamma} so that the first term will always be different from zero.
\end{remark}
\bigskip

\noindent Then we end up in

\begin{equation*}
\begin{split}
D_r\mathcal{F}_3(\theta_c,\tilde{\omega}_c,0;0,0,0,0)=&-\frac{r_1\partial_{\alpha}\tilde{\gamma}_2(\alpha)}{2\pi}\int_{-\pi}^{\pi}\frac{z^c_2(\alpha')+1}{(z^c_1(\alpha'))^2+(z^c_2(\alpha')+1)^2}\omega(\alpha')\,d\alpha'.
\end{split}
\end{equation*}
\medskip

\noindent It remains to prove the invertibility of the derivatives. In particular, the derivatives' matrix is the following

\begin{equation}\label{frechet-derivative}
D\mathcal{F}(\theta_c,\tilde{\omega}_c, 0;0,0,0)=
\begin{pmatrix}
D_{\theta_A}\mathcal{F}_1 & 0 & 0 \\
D_{\theta_A}\mathcal{F}_2 &   D_{\tilde{\omega}}\mathcal{F}_2 & 0\\
D_{\theta_A}\mathcal{F}_3 &  D_{\tilde{\omega}}\mathcal{F}_3 & D_r \mathcal{F}_3
\end{pmatrix}=
\begin{pmatrix}
\Gamma & 0 & 0\\
D_{\theta_A}\mathcal{F}_2 & \mathcal{A}(z^c(\alpha))+\mathcal{I}& 0\\
D_{\theta_A}\mathcal{F}_3 & D_{\tilde{\omega}}\mathcal{F}_3 & D_r\mathcal{F}_3
\end{pmatrix}\cdot 
\begin{pmatrix}
\theta_1\\
\omega_1\\
r_1
\end{pmatrix}
\end{equation} 
\medskip

\noindent where 

\begin{align*}
&\Gamma\theta_1=\cosh(\mathcal{H}\theta_c)\mathcal{H}\theta_1+q\frac{d}{d\alpha}\theta_1\\[3mm]
&(\mathcal{A}(z^c(\alpha))+\mathcal{I})\omega_1=2BR(z^c(\alpha),\omega_1)\cdot\partial_{\alpha}z^c(\alpha)+\omega_1.\\[3mm]
&D_r\mathcal{F}_3(\theta_c,\tilde{\omega}_c,0;0,0,0)=-\frac{r_1\partial_{\alpha}\tilde{\gamma}_2(\alpha)}{2\pi}\int_{-\pi}^{\pi}\frac{z^c_2(\alpha')+1}{(z^c_1(\alpha'))^2+(z^c_2(\alpha')+1)^2}\omega(\alpha')\,d\alpha'.
\end{align*}
\medskip

\noindent We put in evidence only these three operators since the matrix is diagonal and it will be invertible if the diagonal is invertible.\\

\noindent We observe immediately that the choice of the curve $\gamma(\alpha)$ is crucial since the second component of  $\partial_{\alpha}\tilde{\gamma}(\alpha)\neq 0$, for every $\alpha\in [-\pi,\pi]$. So we can invert $D_r\mathcal{F}_3(\theta_c,\tilde{\omega}_c,0;0,0,0)$, as required. For the other two operators we have to use Lemma \ref{DF1-invertible} and Lemma \ref{DF2-invertible} to overcome the problem of the non invertibility of $\Gamma$, see section \ref{point-existence-proof}. Hence, we state the following result.

\begin{theorem}
Let $|A|<A_0$. Then
\medskip

\begin{enumerate}
\item there exists $(\omega_0,\theta_A)$ and a unique smooth function $\tilde{\tau}^*:U_{\omega_0, \theta_A}\rightarrow H^2_{even}$, such that $\tilde{\tau}^*(0,\theta_A)=\tilde{\tau}$ (see \eqref{G-operator}),\\[3mm]

\item there exists $(B,p,\omega_0)$  and a unique smooth function $B^*:U_{p,\omega_0}\rightarrow U_B$, such that  $B^*(0,0)=0$,\\[3mm]

\item there exists a unique smooth function $\Theta_c: U_{B,p,\omega_0}\rightarrow H^2_{odd}\times H^1_{even}\times\mathbb{R}$, such that $\Theta_c(0,0,0)=(\theta_c,\tilde{\omega}_c,0)$
\end{enumerate}
\medskip

 and satisfy

$$\mathcal{F}(\Theta_c(B^*(p,\omega_0), p,\omega_0,);B^*(p,\omega_0),p,\omega_0)=0.$$
\end{theorem}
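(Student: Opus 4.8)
The plan is to establish the three statements by three successive applications of the implicit function theorem (Theorem \ref{IFT}), in the logical order (1), (3), (2), mirroring the point vortex argument of Section \ref{point-existence-proof} with the single new ingredient being the patch-fixing operator $\mathcal{F}_3$. I would first dispatch part (1): the operator $\mathcal{G}$ of \eqref{G-operator} satisfies, by \eqref{operator-tilde-tau}, the relations $\mathcal{G}(\tilde{\tau};0,\theta_A)=0$ and $D_{\tilde{\tau}}\mathcal{G}(\tilde{\tau};0,\theta_A)=\mathrm{Id}$. Since the identity is trivially a bijection, Theorem \ref{IFT} yields a smooth map $\tilde{\tau}^*(\omega_0,\theta_A)$ on a neighbourhood $U_{\omega_0,\theta_A}$ of $(0,\theta_A)$ with $\tilde{\tau}^*(0,\theta_A)=\tilde{\tau}$ and $\mathcal{G}(\tilde{\tau}^*;\omega_0,\theta_A)=0$. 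Reading off the companion $\tilde{\theta}^*$ from \eqref{patch-tilde-tau-theta-quasi-Cauchy-Riemann-1} and substituting both into \eqref{patch-water-waves} makes the full operator $\mathcal{F}=(\mathcal{F}_1,\mathcal{F}_2,\mathcal{F}_3)$ a genuine map of $(\theta_A,\tilde{\omega},r;B,p,\omega_0)$ alone.

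Next I would prepare the second application, which produces $\Theta_c$. Having verified in \eqref{operators-zero} that $\mathcal{F}$ vanishes at the Crapper point $(\theta_c,\tilde{\omega}_c,0;0,0,0)$, the derivative computations of this section show that the Fr\'echet derivative \eqref{frechet-derivative} is block lower triangular with diagonal blocks $\Gamma$, $\mathcal{A}(z^c)+\mathcal{I}$, and $D_r\mathcal{F}_3$. The middle block is invertible by Lemma \ref{DF2-invertible}; the scalar block $D_r\mathcal{F}_3$ is nonzero precisely because the parametrization \eqref{gamma} was chosen so that $\partial_\alpha\tilde{\gamma}_2(\alpha)\neq0$ for every $\alpha\in[-\pi,\pi]$, whence the coefficient multiplying $r_1$ never vanishes. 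The only genuine obstruction is that $\Gamma$ is injective but not surjective, its cokernel being one-dimensional and spanned by $\cos\theta_c$ (Lemma \ref{DF1-invertible}).

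To remove this obstruction I would run the Lyapunov--Schmidt reduction used in the point vortex case. With the $L^2$ projector $\Pi\theta_1=(\cos\theta_c,\theta_1)\cos\theta_c/\|\cos\theta_c\|^2_{L^2}$, which satisfies $\Pi\Gamma=0$ by \eqref{Gamma-coseno}, I replace $\mathcal{F}_1$ by $(\mathcal{I}-\Pi)\mathcal{F}_1$ and form $\tilde{\mathcal{F}}=((\mathcal{I}-\Pi)\mathcal{F}_1,\mathcal{F}_2,\mathcal{F}_3)$. Its derivative at the Crapper point is now a bijection onto $\Gamma(H^2_{odd})\times L^2\times\mathbb{R}$, because each diagonal block is invertible on the appropriate space, so Theorem \ref{IFT} furnishes the smooth map $\Theta_c:U_{B,p,\omega_0}\to H^2_{odd}\times H^1_{even}\times\mathbb{R}$ with $\Theta_c(0,0,0)=(\theta_c,\tilde{\omega}_c,0)$ solving $\tilde{\mathcal{F}}=0$, which is part (3).

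Finally I would recover part (2) and close. Since $\tilde{\mathcal{F}}=0$ forces only $(\mathcal{I}-\Pi)\mathcal{F}_1=0$ (whereas $\mathcal{F}_2$ and $\mathcal{F}_3$ vanish identically), the full system is solved exactly when the scalar reduced function $f(B;p,\omega_0):=(\cos\theta_c,\mathcal{F}_1(\Theta_c(B,p,\omega_0);B,p,\omega_0))$ vanishes, because $\Pi\mathcal{F}_1=f\,\cos\theta_c/\|\cos\theta_c\|^2_{L^2}$. One has $f(0;0,0)=0$, and differentiating in $B$ and using \eqref{Gamma-coseno} together with the Cauchy integral theorem gives $D_Bf(0;0,0)=(\cos\theta_c,e^{-\mathcal{H}\theta_c})=-2\pi\neq0$, exactly as for the point vortex. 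A third application of Theorem \ref{IFT} then yields a smooth $B^*(p,\omega_0)$ with $B^*(0,0)=0$ and $f(B^*(p,\omega_0);p,\omega_0)=0$, giving part (2) and the asserted identity $\mathcal{F}(\Theta_c(B^*(p,\omega_0),p,\omega_0);B^*(p,\omega_0),p,\omega_0)=0$. The main obstacle is not $\Gamma$, which is handled by the established reduction, but the invertibility of $D_r\mathcal{F}_3$ along the entire circle: a symmetric circular patch would make this derivative vanish by the parity of $z^c$ and of the even $\tilde{\omega}$, so the whole scheme hinges on the asymmetric parametrization \eqref{gamma} that keeps $\partial_\alpha\tilde{\gamma}_2$ bounded away from zero.
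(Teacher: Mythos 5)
Your proposal is correct and follows essentially the same route as the paper: the implicit function theorem applied to $\mathcal{G}$ for $\tilde{\tau}^*$, the lower-triangular Fr\'echet derivative with diagonal blocks $\Gamma$, $\mathcal{A}(z^c)+\mathcal{I}$, $D_r\mathcal{F}_3$ (the last invertible thanks to the choice \eqref{gamma} making $\partial_\alpha\tilde{\gamma}_2\neq 0$), and the Lyapunov--Schmidt reduction with the projector onto $\cos\theta_c$ followed by the scalar equation $f(B;p,\omega_0)=0$ to produce $B^*$, exactly as in the point vortex case the paper invokes.
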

\medskip

\noindent The proof of Theorem \ref{patch-existence} holds directly from this Theorem.
\bigskip

\subsection*{Acknowledgements.} 
This work is supported in part by the Spanish Ministry of Science and Innovation, through the “Severo Ochoa Programme for Centres of Excellence in R\&D (CEX2019-000904-S)” and MTM2017-89976-P. DC and EDI
were partially supported by the ERC Advanced Grant 788250.

\printbibliography
\end{document}